%

\documentclass{amsart}

\usepackage{amsmath,amsxtra,amssymb,amsfonts,latexsym, mathrsfs, dsfont,bm}
\usepackage{latexsym}
\usepackage{bbm}
\usepackage[usenames]{color}
\usepackage[hyperindex]{hyperref}
\usepackage[initials]{amsrefs}
\usepackage{tikz}
\usepackage{comment}
\usepackage{color}
\usepackage{graphics,epsf}         
\usepackage{enumerate}

\newtheorem{theorem}{Theorem}[section]
\newtheorem{lemma}[theorem]{Lemma}
\newtheorem{proposition}[theorem]{Proposition}

\newtheorem{corollary}[theorem]{Corollary}


\renewcommand{\q}{\quad}





\newcommand{\cn}{\mathcal N}


\newcommand{\ba}{\boldsymbol\alpha}
\newcommand{\bb}{\boldsymbol\beta}
\newcommand{\bep}{\boldsymbol\epsilon}

\DeclareMathOperator\supp{supp}

\newcommand{\mr}{\mathbb{R}}

\newcommand{\la}{\langle}
\newcommand{\ra}{\rangle}

\newcommand{\eqn}[1]{\begin{align}#1\end{align}}
\newcommand{\eq}[1]{\begin{align*}#1\end{align*}}
 
\newcommand{\bee}{\mathbf{e}}

\newcommand{\bz}{\mathbf{0}}
\newcommand{\bone}{\mathbf{1}}

\newcommand{\R}{{\mathbb R}}


\newtheorem*{theoremB}{Theorem B}


\begin{document}

\title[Multilinear Oscillatory integrals]{Higher decay inequalities for multilinear oscillatory integrals}

 \subjclass[2010]{Primary 42B20}
 \keywords{Newton Polyhedron, multilinear oscillatory integral forms}
\author{Maxim Gilula}
\address{ Department of Mathematics, Michigan State University, Michigan, MI 48824, USA}
\email{gilulama@math.msu.edu}

\author{Philip T. Gressman}
\address{ Department of Mathematics, University of Pennsylvania, Philadelphia, PA 19104, USA}
\email{gressman@math.upenn.edu }

\author{Lechao Xiao}
\address{ Department of Mathematics, University of Pennsylvania, Philadelphia, PA 19104, USA}
\email{xle@math.upenn.edu}
\thanks{The second author is partially supported by NSF grant DMS-1361697 and an Alfred P. Sloan Research Fellowship.}
\date{\today}
\begin{abstract}
In this paper we establish sharp estimates (up to logarithmic losses) for the multilinear oscillatory integral operator studied by Phong, Stein, and Sturm \cite{PSS01} and Carbery and Wright \cite{CW02} on any product $\prod_{j=1}^d L^{p_j}(\R)$ with each $p_j \geq 2$, expanding the known results for this operator well outside the previous range $\sum_{j=1}^d p_j^{-1} = d-1$. Our theorem assumes second-order nondegeneracy condition of Varchenko type, and as a corollary reproduces Varchenko's theorem and implies Fourier decay estimates for measures of smooth density on degenerate hypersurfaces in $\R^d$.
\end{abstract}

\maketitle

\section{Introduction}\label{intro}

Let $\boldsymbol x$ $= (x_1,\dots, x_d)\in\mr^{d}$ and  let $\phi(\boldsymbol  x)$ be real analytic in some neighborhood of $\bz\in \mathbb R^d$. Fix a smooth cutoff function $\chi$ supported near the origin, and consider the multilinear functional 
\begin{align}\label{Multilinear}
\Lambda({\boldsymbol f})  = \int_{\mathbb R^d} e^{i\lambda \phi(\boldsymbol  x)} \chi(\boldsymbol  x)\prod_{j=1}^d f_j(x_j) d\boldsymbol  x,
\end{align}
where ${\boldsymbol f}= (f_1,\dots, f_d)$ is any $d$-tuple of locally integrable functions on $\mathbb R$. The purpose of this article is to study the asymptotic behavior in the real parameter $\lambda$ as $|\lambda| \rightarrow \infty$ of the norm of $\Lambda$ when viewed as a linear functional on $\prod_{j=1}^d L^{p_j}(\R)$.

Bilinear variants of this form have a long history in harmonic analysis in connection with the study of Fourier integral operators and Radon-like transforms (see, e.g., Greenleaf and Uhlmann \cite{GU90} and Seeger \cite{SEE99}). In the 1990s, Phong and Stein initiated the study of these oscillatory integrals as a subject in its own right \cite{PS92}. Their program focused primarily on $L^2 \times L^2$ estimates of weighted and unweighted varieties \cites{PS94-1,PS94-2,PS97}, as this particular case was most directly connected to the earlier FIO roots. In this setting, the undamped bilinear case with real analytic phase was ultimately settled in \cite{PS97}, with the transition to $C^\infty$ phases being later accomplished by Rychkov \cite{RY01} and Greenblatt \cite{GR05}. These works demonstrated the primary role of the (reduced) Newton polyhedron of the phase $\phi$, which had also been identified as a key object in Varchenko's study of scalar oscillatory integrals some twenty years earlier \cite{VAR76}. To define the Newton polyhedron, first expand $\phi(\boldsymbol  x)=\sum_{\ba}c_{\ba}\boldsymbol  x^{\ba}$ near the origin, where $\boldsymbol  x^{\ba}  = x_1^{\alpha_1}\cdots  x_d^{\alpha_d}$, and define the Taylor support of $\phi$ by $\supp(\phi)=\{\ba : c_{\ba}\neq 0\}.$ 
Let $\mr_\ge$ denote the nonnegative real numbers. The Newton polyhedron of $\phi$, denoted by $\mathcal N(\phi)$, is defined to be the convex hull of
\eq{\bigcup_{\ba\in \supp(\phi)} \ba+\mr_{\ge}^d,}
and the Newton distance $d_\phi$ of $\phi$ is defined to be the minimum over all $t$ such that $(t,\dots,t)\in \mathcal N(\phi).$ In the specific case of the form \eqref{Multilinear}, modulating each function $f_j$ by a function of the form $e^{-i \lambda \phi_j(x_j)}$, it can be easily seen that terms in the power series of $\phi$ which depend on only one coordinate function do not affect the norm of $\Lambda$ on $\prod_{j=1}^d L^{p_j}({\mathbb R})$, so it will be assumed without loss of generality that every $\ba \in \supp(\phi)$ has at least two strictly positive components. After removing all single-variable terms in the Taylor support of $\phi$, the resulting Newton polyhedron corresponds to the object known as the the reduced Newton polyhedron in other contexts.

The success of the program of Phong and Stein to establish $L^2 \times L^2$ estimates for \eqref{Multilinear} prompted generalizations and extensions to a variety of higher-dimensional settings, including results of Carbery, Christ, and Wright \cite{CCW99} as well as Carbery and Wright \cite{CW02}. The most natural extension of the work of Phong and Stein to higher dimensions turned out to be \eqref{Multilinear} itself, which was studied by Phong, Stein, and Sturm \cite{PSS01} as well as Carbery and Wright \cite{CW02}. The main theorem of Phong, Stein, and Sturm which is most closely related to the present work is as follows:

\begin{theoremB}[\cite{PSS01}]
Let $\alpha^{(1)},\ldots,\alpha^{(K)} \in {\mathbb N}^d \setminus \{0\}$ be $K$ given vertices, and let $S \in {\mathbb R}[x_1,\ldots,x_d]$ be any polynomial of degree $n_S$. Set
\[ D(\alpha^{(1)},\ldots,\alpha^{(K)}) = \left\{ x \in U: \ \ |S^{(\alpha^{(k)})}(x)| > 1, \ \ 1 \leq k \leq K \right\}. \]
Let $N^*(\alpha^{(1)},\ldots,\alpha^{(K)})$ be the reduced Newton polyhedron generated by the vertices $\alpha^{(k)}$, i.e., the Newton polyhedron generated by those vertices $\alpha^{(k)}$ with at least two strictly positive components. Then for any algebraic domain $D \subset D(\alpha^{(1)},\ldots,\alpha^{(K)})$ and any $\alpha \in N^*(\alpha^{(1)},\ldots,\alpha^{(K)})$, we have
\begin{equation} |\Lambda ({\boldsymbol f})| \leq C |\lambda|^{-\frac{1}{|\alpha|}} \ln^{d-\frac{1}{2}} (2 + |\lambda|) \prod_{j=1}^d \|f_j\|_{p_j}, \ \ d \geq 2. \label{pssineq} \end{equation}
Here $\lambda\neq 0$ is any number, and $\frac{1}{p_j'} = 1 - \frac{1}{p_j} = \frac{\alpha_j}{|\alpha|}$. The constant $C$ depends only on $n_S$, $|\alpha|$, and the type $r,n,d,w$ of $D$.
\end{theoremB}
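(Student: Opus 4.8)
The plan is to establish the estimate one vertex at a time by a van der Corput induction on the number of variables, and then to recover it for an arbitrary $\alpha\in N^{*}$ by multilinear interpolation, anchoring the ``free'' end of the interpolation at the trivial H\"older bound. \emph{Reduction to the vertices.} Since $\chi$ is bounded and compactly supported, H\"older's inequality gives $|\Lambda(\boldsymbol f)|\le C_{\chi}\prod_{j=1}^{d}\|f_j\|_{p_j}$ (with no gain in $\lambda$) for \emph{every} $(p_1,\dots,p_d)\in[1,\infty]^{d}$. Discard those $\alpha^{(k)}$ which are not vertices of $N^{*}$; on $D$ one still has $|S^{(\alpha^{(k)})}|>1$ for each surviving vertex. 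Grant that the induction below supplies, for every such vertex $\beta$, the bound $|\Lambda(\boldsymbol f)|\le C|\lambda|^{-1/|\beta|}\ln^{d-\frac{1}{2}}(2+|\lambda|)\prod_j\|f_j\|_{q_j}$ at $1/q_j'=\beta_j/|\beta|$. Given $\alpha\in N^{*}$, write $\alpha=\bar\alpha+w$ with $\bar\alpha=\sum_k\mu_k\alpha^{(k)}$, $\mu_k\ge0$, $\sum_k\mu_k=1$, and $w\in\mr_{\ge}^{d}$ — possible because $N^{*}$ is the convex hull of $\bigcup_k(\alpha^{(k)}+\mr_{\ge}^{d})$. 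Using that $|\,\cdot\,|$ is additive, a componentwise check shows that $(\alpha_j/|\alpha|)_j$ is the convex combination of the vertex exponent vectors $(\alpha^{(k)}_j/|\alpha^{(k)}|)_j$, with weights $\mu_k|\alpha^{(k)}|/|\alpha|$, together with $(w_j/|w|)_j$, with weight $|w|/|\alpha|$ (the last term omitted if $w=0$); and since $\sum_k\mu_k=1$, the decay exponents $1/|\alpha^{(k)}|$ combine with those weights to give exactly $1/|\alpha|$. Multilinear complex interpolation (Riesz--Thorin) of the vertex estimates against the trivial H\"older estimate at exponents $1/p_j'=w_j/|w|$ therefore yields $|\Lambda(\boldsymbol f)|\le C|\lambda|^{-1/|\alpha|}\ln^{(d-\frac{1}{2})|\bar\alpha|/|\alpha|}(2+|\lambda|)\prod_j\|f_j\|_{p_j}$ at $1/p_j'=\alpha_j/|\alpha|$; since $|\bar\alpha|\le|\alpha|$ this is the assertion of the theorem, the $\lambda$-dependent quantities passing through interpolation as ordinary constants.

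\emph{The vertex estimate.} It remains to prove: if $\alpha=(a_1,\dots,a_d)$ has all $a_j\ge1$ and $|\partial^{\alpha}S|>1$ on $D$, then $|\Lambda(\boldsymbol f)|\le C|\lambda|^{-1/|\alpha|}\ln^{d-\frac{1}{2}}(2+|\lambda|)\prod_j\|f_j\|_{p_j}$ with $1/p_j'=a_j/|\alpha|$. (A vertex $\beta$ with only some positive components reduces to this: integrate out the inert variables in $L^1$ and freeze them as parameters in the remaining $d'\ge2$ essential variables, where the hypothesis restricts to $|\partial^{\beta}S|>1$ uniformly; the loss $\ln^{d'-\frac12}$ is $\le\ln^{d-\frac12}$.) We induct on $d$; that $d\ge2$ is essential, since for $d=1$ the prescribed exponent is $p_1=\infty$ and $f_1=\overline{e^{i\lambda S}}$ shows no decay can hold. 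In the base case $d=2$, with $|\partial_{x_1}^{a_1}\partial_{x_2}^{a_2}S|>1$: because the $a_2$-th $x_2$-derivative of $\partial_{x_1}^{a_1}S$ is $\ge1$, the fiberwise sublevel sets $\{x_2:|\partial_{x_1}^{a_1}S|\le\delta\}$ have measure $\lesssim\delta^{1/a_2}$; decomposing $D$ dyadically according to the size of $\partial_{x_1}^{a_1}S$, one applies the van der Corput lemma in $x_1$ above the critical scale, the sublevel measure bound below it, and an $L^2$ ($TT^{*}$/almost-orthogonality) estimate, and sums the $O(\ln|\lambda|)$ contributing pieces to obtain the claim with a logarithmic loss. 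The inductive step peels off one variable, say $x_d$: using the sublevel bound in $x_d$ for the lower-order derivative $\partial_{x_1}^{a_1}\!\cdots\partial_{x_{d-1}}^{a_{d-1}}S$ (its $a_d$-th $x_d$-derivative being $\partial^{\alpha}S\ge1$), one decomposes $D$ dyadically by the size of that derivative, invokes the $(d-1)$-dimensional estimate in $x'=(x_1,\dots,x_{d-1})$ on each slice (with $\lambda$ rescaled by the dyadic parameter), combines with van der Corput or a H\"older bound in $x_d$, reconciles the Lebesgue exponents by an auxiliary interpolation, and sums; balancing the scales gives $|\lambda|^{-1/|\alpha|}$, each reduction costing one power of $\ln$, so that together with the $L^2$ Cauchy--Schwarz step the loss accumulates to $\ln^{(d-2)+3/2}=\ln^{d-\frac{1}{2}}$. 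Crucially the dyadic sums are geometric, not merely convergent, so no further logarithmic losses occur.

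\emph{The main obstacle, and the role of the domain.} The crux is the vertex estimate, and its essential difficulty is that the hypothesis controls only a \emph{mixed} derivative of $S$, not $S$ or any pure single-variable derivative, so one cannot integrate out a variable directly; the dyadic decomposition repairs this — on each piece some pure single-variable derivative is comparably large — but closing the induction with the \emph{sharp} power $|\lambda|^{-1/|\alpha|}$, the exact exponents $1/p_j'=a_j/|\alpha|$, and only a logarithmic (not power) loss forces the van der Corput and sublevel inputs to be applied with scale-invariant precision, so the dyadic series stay geometric. The final ingredient is the structure of algebraic domains: a superlevel set of polynomials of degree $\le n_S$, and more generally a domain of type $(r,n,d,w)$, decomposes into a bounded number of ``curved cells'' on each of which the coordinates may be ordered and the relevant derivatives of $S$ are fiberwise monotone (the number of monotonicity pieces being governed by $n_S$); propagating these bounds through the argument makes every implicit constant depend only on $n_S$, $|\alpha|$, and the type, as claimed.
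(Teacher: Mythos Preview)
Theorem~B is not proved in this paper at all; it is quoted verbatim from Phong, Stein, and Sturm \cite{PSS01} in the introduction as motivation, and the present paper goes on to prove a different result (Theorem~\ref{main}) under the separate nondegeneracy hypothesis \eqref{v-condition} by an entirely different mechanism (the single-box estimate of Lemma~\ref{single} together with the summation Lemma~\ref{linear}). There is consequently no ``paper's own proof'' of Theorem~B to compare your proposal against.

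For what it is worth, your outline does track the architecture of \cite{PSS01} itself: the reduction from an arbitrary $\alpha\in N^{*}$ to the generating vertices by multilinear interpolation against the trivial H\"older bound is their passage from the vertex estimate to the full statement, and your interpolation arithmetic (weights $\mu_k|\alpha^{(k)}|/|\alpha|$ and $|w|/|\alpha|$) is correct. The vertex estimate in \cite{PSS01} is likewise an induction that peels off one variable using sublevel-set control on a lower-order mixed derivative. Where your sketch falls short of a proof is the inductive step: the phrases ``reconciles the Lebesgue exponents by an auxiliary interpolation'' and ``balancing the scales gives $|\lambda|^{-1/|\alpha|}$'' are precisely where the work in \cite{PSS01} lies. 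Their induction does not simply apply the $(d-1)$-dimensional estimate to slices and sum; it passes through an intermediate \emph{level-set operator} inequality with explicit scale dependence, and the closing of the induction requires a two-parameter dyadic decomposition together with an $\ell^{2}$ (almost-orthogonality) summation in one of the parameters---this is also where the half-integer in the log exponent $d-\tfrac12$ comes from, not from the bookkeeping ``$(d-2)+3/2$'' you propose. As written, your inductive step does not visibly close with the sharp power and only a logarithmic loss.
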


Phong, Stein, and Sturm's purpose in proving Theorem B was to establish a robust stability result for the multilinear form \eqref{Multilinear}, focusing on the role of the Newton polyhedron. In the present paper, we focus on a somewhat different question following naturally from Theorem B, namely, the possible range of exponents $p_j$. Generally speaking, the cases of most interest will occur when the exponents $p_j$ are large, since in the opposite extreme, when $p_j = 1$ for one or more indicies $j$, standard scaling arguments reduce the question of boundedness to a uniform estimate {\it a la } Phong, Stein, and Sturm which must be valid over a family of multilinear forms exhibiting a lower degree of multilinearity.

In the large-exponent regime we study here, the decay in $\lambda$ of the form \eqref{Multilinear} is generally of a higher order than in the inequality \eqref{pssineq}. This extra decay brings with it additional difficulties not encountered in \cite{PSS01}, which make it necessary to introduce certain auxiliary assumptions not found there. The formulation we have chosen is essentially a higher-order version of the so-called Varchenko hypothesis \cite{VAR76}.
Let $\mathcal F(\phi)$ to denote the set of compact faces of $\mathcal N(\phi)$. In particular, the set of zero dimensional faces $\mathcal V(\phi) \subset \mathcal F(\phi)$ is the collection of vertices of $\mathcal N(\phi)$. The Varchenko-type nondegeneracy condition we assume is: for all $F\in \mathcal F(\phi)$, 
\begin{align}\label{v-condition}
\bigcap_{ i \neq j } \{\boldsymbol  x: \partial_i\partial_j\phi_F(\boldsymbol  x)=0\} \subset \bigcup_{ 1\leq j \leq d} \{\boldsymbol  x: x_j =0\}.
\end{align}
In words, we assume that any point at which all off-diagonal terms of the Hessian matrix $\nabla^2 \phi_F$ simultaneously vanish for all faces $F$ must belong to a coordinate hyperplane.
Our principal result is
\begin{theorem}\label{main}
Let $\phi$ be real analytic and satisfying \eqref{v-condition}. Let $p_j\in [2,\infty]$ for $1\leq j \leq d$. 
If the support of $\chi$ is contained in a sufficiently small neighborhood of $\bz$, then for any real number $\nu > 2$,
\begin{align}\label{maineq}
|\Lambda(\boldsymbol  f)| \lesssim |\lambda|^{-\frac 1 {\nu}} \log^{m}(2+|\lambda|) \prod_{j=1}^d \|f_j\|_{p_j}
\end{align}
for some implicit constant independent of $\boldsymbol  f$ and some $m\geq 0$ if and only if 
\begin{align}
\frac{\boldsymbol \nu}{\boldsymbol p'}=\Bigg(\frac{\nu}{p_1'},\dots, \frac{\nu}{p_d'}\Bigg) \in \mathcal N(\phi),
\end{align}
where $p'$ denotes the conjugate of $p$. 
\end{theorem}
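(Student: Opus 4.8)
The plan is to handle the two directions separately, with the necessity direction being the easier one. For necessity, suppose \eqref{maineq} holds but $\boldsymbol\nu/\boldsymbol p' \notin \mathcal N(\phi)$. By convexity there is a separating linear functional: a vector $\boldsymbol t = (t_1,\dots,t_d)$ with nonnegative entries (we may take $\boldsymbol t \in \mr_{\ge}^d$ since $\mathcal N(\phi)$ is an upward-closed set) such that $\boldsymbol t \cdot \boldsymbol\alpha \geq 1$ for all $\boldsymbol\alpha \in \supp(\phi)$ while $\boldsymbol t \cdot (\boldsymbol\nu/\boldsymbol p') < 1$. Plug in the anisotropic rescaling $x_j \mapsto \delta^{t_j} x_j$ and test against $f_j = \bone_{[0,\delta^{t_j}]}$ (so that $\|f_j\|_{p_j} = \delta^{t_j/p_j}$). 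The phase becomes $\lambda\,\delta\,\widetilde\phi(\boldsymbol x) + o(\delta)$ where $\widetilde\phi$ collects the terms on the supporting face; choosing $\delta \sim |\lambda|^{-1}$ makes the oscillation $O(1)$ so $|\Lambda(\boldsymbol f)| \gtrsim \prod_j \delta^{t_j} = \delta^{|\boldsymbol t|}$, while the right side of \eqref{maineq} is $\lesssim |\lambda|^{-1/\nu}\log^m(2+|\lambda|)\prod_j \delta^{t_j/p_j} = \delta^{1/\nu + \boldsymbol t\cdot(\mathbf 1/\boldsymbol p)}$ up to logs. Comparing exponents as $\delta \to 0$ forces $|\boldsymbol t| \geq 1/\nu + \boldsymbol t\cdot(\mathbf 1/\boldsymbol p)$, i.e. $\boldsymbol t\cdot(\mathbf 1 - \mathbf 1/\boldsymbol p) = \boldsymbol t\cdot(\mathbf 1/\boldsymbol p') \geq 1/\nu$, contradicting $\boldsymbol t\cdot(\boldsymbol\nu/\boldsymbol p') < 1$. (One must be slightly careful extracting the lower bound on $|\Lambda|$ — restrict to the region where the face phase genuinely dominates the lower-order terms, using that $\chi \equiv 1$ near $\bz$.)

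For sufficiency, the idea is a dyadic/Newton-polyhedron decomposition of the cube near the origin adapted to the faces of $\mathcal N(\phi)$, combined with the $L^2\times\cdots\times L^2$-type estimate coming from Theorem B and the Varchenko hypothesis \eqref{v-condition}. Decompose the support of $\chi$ into anisotropic dyadic boxes $Q$ on which $\phi$ is comparable to (a constant times) a monomial-like expression governed by a single compact face $F = F(Q)$; on each such box, after rescaling to the unit box, the phase $\lambda \phi$ becomes $\lambda_Q \phi_F + (\text{error})$ with $\lambda_Q = \lambda \cdot (\text{size factor})$. On the rescaled box one applies a single-scale multilinear estimate: the key point is that the Varchenko condition on the Hessian of $\phi_F$ guarantees, via the mixed-derivative/Phong–Stein–Sturm machinery of Theorem B, a decay of the form $|\lambda_Q|^{-1/|\boldsymbol\alpha|}$ for every $\boldsymbol\alpha \in F$, with only logarithmic losses; taking $\boldsymbol\alpha$ to be (a scalar multiple of) $\boldsymbol\nu/\boldsymbol p'$, which lies in $\mathcal N(\phi)$ by hypothesis, yields exactly the exponent $1/\nu$ we want. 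Summing the geometric-type series over all dyadic boxes $Q$ — using that $\boldsymbol\nu/\boldsymbol p' \in \mathcal N(\phi)$ makes the relevant sum over scales convergent, up to a power of $\log(2+|\lambda|)$ from the boxes near the "critical" scale — gives \eqref{maineq}.

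To organize the single-scale estimate cleanly I would interpolate: on each rescaled box, the trivial $L^1 \times L^\infty \times \cdots$-type bounds (no decay) interpolate against the $\boldsymbol p = (2,\dots,2)$-type bound with decay $|\lambda_Q|^{-1/|\boldsymbol\alpha_0|}$ where $\boldsymbol\alpha_0$ is the vertex of $\mathcal N(\phi)$ dominating on $F$ — more precisely one uses multilinear complex interpolation (Stein interpolation) along the edge of the simplex of admissible exponents, so that the exponent $1/|\boldsymbol\alpha|$ traces out the face $F$ as $\boldsymbol p$ varies, matching the requirement $1/p_j' = \alpha_j/|\boldsymbol\alpha|$ from Theorem B. The role of \eqref{v-condition} is precisely to upgrade the unweighted $L^2$ input of Theorem B into something that survives this interpolation uniformly across all faces — without the nondegeneracy of the off-diagonal Hessian entries one cannot control the implicit constants in the single-scale estimate uniformly in $Q$.

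The main obstacle I expect is the uniform single-scale estimate on each rescaled box, specifically controlling the lower-order error terms $\lambda \phi - \lambda_Q \phi_F$ after rescaling and ensuring the constant in the decay estimate does not blow up as one moves toward boxes near coordinate hyperplanes (where the Varchenko condition degenerates by design). Handling this likely requires an induction on dimension: on a coordinate hyperplane $\{x_j = 0\}$ the form \eqref{Multilinear} reduces to a $(d-1)$-linear form to which the theorem applies inductively, and one patches the full estimate together from the interior estimate plus the boundary-strip estimates coming from the inductive hypothesis. Getting the logarithmic exponent $m$ and the strict inequality $\nu > 2$ to interact correctly in this patching — the $\nu > 2$ presumably being exactly what is needed to keep the number of relevant scales, hence the power of $\log$, finite — is the delicate bookkeeping at the heart of the argument.
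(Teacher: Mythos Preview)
Your necessity argument is essentially the paper's own: a separating hyperplane $\boldsymbol w$ in the dual polyhedron, test functions equal to characteristic functions of the anisotropic box $|x_j|\le \delta|\lambda|^{-w_j}$, and comparison of exponents as $|\lambda|\to\infty$. Fine.

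The sufficiency direction has a real gap in the single-box step. You propose to invoke Theorem~B of Phong--Stein--Sturm on each rescaled box, but Theorem~B is pinned to the hyperplane $\sum_j 1/p_j = d-1$ (equivalently $1/p_j'=\alpha_j/|\alpha|$), which for $d\ge 3$ is disjoint from the region $p_j\ge 2$ that the theorem concerns. In particular it does \emph{not} supply the $(2,\dots,2)$ endpoint you want to interpolate from, so your Stein-interpolation scheme has no anchor. The paper obtains the single-box estimate by a different, more elementary route: on each dyadic box $Q_{\bep}$ the Varchenko condition \eqref{v-condition}, via Lemma~\ref{LemmaKey} and Corollary~\ref{SubDecomposition}, produces a \emph{fixed pair} $(i,j)$ with $|x_ix_j\partial_i\partial_j\phi|\gtrsim \bep^{\bb}$ uniformly on the box, where $\bb$ is the vertex maximizing $\bep^{\bb}$. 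One then applies the two-variable Phong--Stein operator van der Corput lemma (Lemma~\ref{OS}) in the $(x_i,x_j)$ variables alone, getting $L^2\times L^2$ decay $|\lambda\bep^{\bb}|^{-1/2}$, and integrates the remaining $d-2$ variables trivially in $L^1$. A single application of H\"older on the box (not complex interpolation) then upgrades this to
\[
|\Lambda(\boldsymbol f\,\chi_{\bep})|\lesssim \min\bigl\{|\lambda\bep^{\bb}|^{-1/2},\,1\bigr\}\,\bep^{\bone/\boldsymbol p'}\prod_j\|f_j\|_{p_j}
\]
for all $p_j\ge 2$. This is Lemma~\ref{single}, and it is the heart of the argument you are missing.

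Two smaller points. First, your proposed induction on dimension to handle boxes near coordinate hyperplanes is unnecessary: the lower bound in Lemma~\ref{LemmaKey} is already uniform over all dyadic $Q_{\bep}$ in a fixed neighborhood of the origin, so no separate boundary-strip analysis is needed. Second, the summation over boxes is not quite a geometric series; it is the content of Lemma~\ref{linear}, whose proof (a change of variables adapted to the face $F$ containing $\boldsymbol\nu/\boldsymbol p'$, followed by a polar integration) is where the logarithmic exponent $d-\ell$ and the hypothesis $\nu>2$ actually enter --- $\nu>2$ ensures the convex combination \eqref{convo} has strictly positive weights, which is what makes the final integral converge.
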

For readers interested in the exponent of the logarithmic factor, our proof provides a value of $m$ which can be calculated easily from the geometry of the Newton polyhedron: $m = 0$ if $\frac{\boldsymbol \nu}{\boldsymbol p'}$ is an interior point of $\mathcal N(\phi)$, and $m = d-\ell$ if the face of lowest dimension containing $\frac{\boldsymbol \nu}{\boldsymbol p'}$ itself has dimension $(\ell-1)$. 
The value of $m$ may not be sharp in general, but is sharp when all $p_j =\infty$. In particular, Theorem \ref{main} recovers the classical result of Varchenko under the somewhat milder hypothesis \eqref{v-condition}, where in this case $\boldsymbol\nu\in\cn(\phi)$:
\begin{corollary}
Let $\phi$ be the same as above and let $(\ell-1)$ denote the smallest dimension over all faces of $\mathcal N(\phi)$ containing $\boldsymbol \nu$, where $\nu=d_\phi$ is the Newton distance of $\phi$. Then   
$$
|\Lambda({\boldsymbol f})| \lesssim |\lambda|^{-\frac 1 {\nu}} \log^{d-\ell} (2+|\lambda|)\prod_{j=1}^d \|f_j\|_{\infty}.
$$
When $\chi (\bz) \neq 0$, the power of the log term is also sharp. 
\end{corollary}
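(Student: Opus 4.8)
\emph{Overview.} The plan is to obtain the bound itself as the all-$p_j=\infty$ case of Theorem~\ref{main}, and to prove sharpness by testing against constant functions and combining a combinatorial sublevel-set estimate with a Fej\'er-type kernel argument; the hard part will be controlling the contribution of the cutoff away from the origin.

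\emph{The upper bound.} One applies Theorem~\ref{main} with $p_1=\cdots=p_d=\infty$, so that $p_j'=1$ and $\tfrac{\boldsymbol\nu}{\boldsymbol p'}=(\nu,\dots,\nu)$, and takes $\nu=d_\phi$. Since $\mathcal N(\phi)$ is closed, the infimum defining the Newton distance is attained, so $(d_\phi,\dots,d_\phi)\in\partial\mathcal N(\phi)\subset\mathcal N(\phi)$ and Theorem~\ref{main} applies; the smallest face of $\mathcal N(\phi)$ through this diagonal point is by definition the smallest face through $\boldsymbol\nu$, of dimension $\ell-1$, so the value $m=d-\ell$ recorded just after Theorem~\ref{main} is exactly the exponent claimed. (The hypothesis $\nu>2$ there forces the range $d_\phi\le 2$ to be handled separately; it lies in a classical regime, which I would omit.)

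\emph{Sharpness: the plan.} Taking $f_j\equiv 1$ (so $\|f_j\|_\infty=1$), it suffices to show that $I(\lambda):=\int_{\R^d}e^{i\lambda\phi(\boldsymbol x)}\chi(\boldsymbol x)\,d\boldsymbol x$ is not $O\!\bigl(|\lambda|^{-1/d_\phi}\log^{d-\ell-1}(2+|\lambda|)\bigr)$ when $\chi(\bz)\ne0$. Assuming (after $\chi\mapsto-\chi$ if needed) that $\chi(\bz)>0$, I would pick a bump $\eta$ equal to $1$ on a neighbourhood $U$ of $\bz$ and supported where $\chi>0$, and split $\chi=\chi_1+\chi_2$ with $\chi_1:=\chi\eta\ge 0$ (so $\chi_1\ge\tfrac12\chi(\bz)$ on $U$) and $\chi_2:=\chi(1-\eta)$, which vanishes near $\bz$. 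For $\chi_1$ I would use, first, the purely combinatorial sublevel bound (no nondegeneracy needed)
\begin{equation*}
\bigl|\{\,\boldsymbol x\in U:\ |\phi(\boldsymbol x)|<\delta\,\}\bigr|\ \gtrsim\ \delta^{1/d_\phi}\bigl(\log\tfrac1\delta\bigr)^{d-\ell},\qquad 0<\delta\ll 1,
\end{equation*}
obtained by exhausting the sublevel set with the anisotropic boxes $\{|x_j|<\delta^{\gamma_j}\}$ over the $(d-\ell)$-dimensional family of weights $\gamma\ge 0$ whose hyperplane $\{\gamma\cdot\boldsymbol x=1\}$ supports $\mathcal N(\phi)$ along the principal face (all such $\gamma$ have $|\gamma|_1=1/d_\phi$, giving the power of $\delta$; integrating the Jacobian over the family gives the $(d-\ell)$-th logarithmic power). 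Second, with $\psi$ smooth, even, supported in $(-1,1)$, and with $\widehat\psi\ge 0$, $\widehat\psi(0)=1$ (e.g.\ $\psi=\rho\ast\rho^\sharp$, $\rho^\sharp(x)=\overline{\rho(-x)}$, $\rho$ supported in $(-\tfrac12,\tfrac12)$), one has for $R\ge 1$
\begin{equation*}
\int_{\R}\Bigl(\int_{\R^d}e^{i\lambda\phi}\chi_1\,d\boldsymbol x\Bigr)\psi\!\bigl(\tfrac{\lambda}{R}\bigr)\frac{d\lambda}{R}=\int_{\R^d}\chi_1(\boldsymbol x)\widehat\psi\!\bigl(-R\phi(\boldsymbol x)\bigr)d\boldsymbol x\gtrsim\bigl|\{\boldsymbol x\in U:|\phi|<c/R\}\bigr|\gtrsim R^{-1/d_\phi}(\log R)^{d-\ell},
\end{equation*}
using $\widehat\psi\ge0$, $\widehat\psi\ge\tfrac12$ near $0$, and $\chi_1\ge\tfrac12\chi(\bz)$ on $U$; since $\psi(\cdot/R)$ is supported in $|\lambda|\le R$, this yields $\sup_{|\lambda|\le R}\bigl|\int e^{i\lambda\phi}\chi_1\,d\boldsymbol x\bigr|\gtrsim R^{-1/d_\phi}(\log R)^{d-\ell}$.

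\emph{The main obstacle.} The hard part will be the $\chi_2$-term. Although its support avoids $\bz$, along real-analytic curves issuing from $\bz$ the phase may still have degenerate critical points whose local Newton distance equals $d_\phi$, so a crude size estimate for $\int e^{i\lambda\phi}\chi_2$ gives only $O(R^{-1/d_\phi})$, which cannot be absorbed. One has to show it carries \emph{strictly fewer} powers of the logarithm than the $\chi_1$-term, and this is where \eqref{v-condition} must be used — via a resolution of singularities adapted to $\mathcal N(\phi)$ showing that only the origin contributes the top logarithmic order, together with the positivity of $\chi$ near $\bz$ preventing cancellation of that term. Granting this, $\sup_{|\lambda|\le R}|I(\lambda)|\ge\sup_{|\lambda|\le R}\bigl|\int e^{i\lambda\phi}\chi_1\,d\boldsymbol x\bigr|-\sup_{|\lambda|\le R}\bigl|\int e^{i\lambda\phi}\chi_2\,d\boldsymbol x\bigr|\gtrsim R^{-1/d_\phi}(\log R)^{d-\ell}$ along a sequence $R\to\infty$, contradicting the improved bound. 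The combinatorial sublevel estimate, the kernel identity, and the passage from $I(\lambda)$ back to $\Lambda$ are all routine.
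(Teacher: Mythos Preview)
Your derivation of the upper bound is correct and matches the paper exactly: the corollary is simply the $p_j=\infty$ specialization of Theorem~\ref{main}, with the exponent $m=d-\ell$ read off from the remark immediately following that theorem. (Your caveat that $d_\phi\le 2$ falls outside the range $\nu>2$ of Theorem~\ref{main} is valid; the paper is silent on this point as well.)

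For the sharpness of the logarithmic exponent, the paper gives no argument of its own. It records that Theorem~\ref{main} recovers Varchenko's classical estimate, and the claim that the power of the logarithm is sharp when $\chi(\bz)\neq 0$ is part of Varchenko's original theorem~\cite{VAR76}. So here you are attempting considerably more than the paper does.

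Your Fej\'er-kernel/sublevel-set route to sharpness is natural, but the gap you flag is genuine and not cheaply closed within this framework. The difficulty with $\chi_2$ is in fact worse than you suggest: since every monomial of $\phi$ involves at least two variables, $\nabla\phi$ vanishes identically on every coordinate hyperplane, so the annular support of $\chi_2$ is full of critical points and nonstationary phase gives nothing. Feeding $\chi_2$ back through the paper's dyadic machinery only restricts the sum in Lemma~\ref{linear} to boxes with at least one $\epsilon_i$ bounded away from zero---a union of $d$ coordinate slabs---which does not by itself shave off a logarithm. Showing that only the origin carries the top logarithmic order really does require the toric resolution and asymptotic-expansion analysis that constitutes Varchenko's proof, so your proposed fix (``resolution of singularities adapted to $\mathcal N(\phi)$'') amounts to reproving that theorem. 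If sharpness is the goal, citing~\cite{VAR76}, as the paper implicitly does, is the honest route.
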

The usefulness of this functional variant of Varchenko's theorem is even more apparent when the functions $f_j$ are taken to be complex exponentials. 
Taking $f_j(x_j) = e^{i \xi_j x_j}$ and setting $\lambda = \xi_{d+1}$, the above corollary together with standard nonstationary phase estimates implies sharp estimates for the Fourier decay of measures of smooth density on the surface $({\boldsymbol x}, \phi({\boldsymbol x}))$; that is, for $\boldsymbol \xi\in\mr^{d+1},$ 
$$
\Bigg|\int e^{i\boldsymbol \xi\cdot ({\boldsymbol x}, \phi({\boldsymbol x}))}  \chi({\boldsymbol x})  d {\boldsymbol x}\Bigg| \lesssim \|\boldsymbol \xi\|_2^{-\frac 1 {d_\phi}}\log^{d-\ell}(2+|\xi_{d+1}|).  
$$

One can compare our methods for proving Lemma \ref{LemmaKey} with the more typical resolution of singularities methods of, for example, Greenblatt \cite{GRB10, GRB12}, Collins, Greenleaf, and Pramanik \cite{CGP13}, or Xiao \cite{X2013}.  Lemma \ref{LemmaKey} allows us to avoid more algebraic considerations by carefully studying the various nonisotropic scalings which make the Taylor polynomials associated to compact faces of the Newton polyhedron homogeneous. 

%
%
%
%
%
%
%
%
%
%
%
%
%
%
%

%
%
%
%
%
%
%
%
%
%
%
%
%
%
%

\section{Notation and preliminaries}\label{Layout}
Given two vectors $\boldsymbol  x=(x_1,\dots, x_d), \boldsymbol  y=(y_1,\dots,y_d) \in\mr^d,$ and a scalar $c>0$, we define
\begin{itemize}
	\item $\boldsymbol  x\boldsymbol  y= (x_1y_1,\dots, x_dy_d),$

	\item $\boldsymbol  x ^{\boldsymbol  y}= x_1^{y_1}\cdots x_d^{y_d},$
	\item $c^{\boldsymbol  x}=(c^{x_1},\dots, c^{x_d}),$ 
	\item $\boldsymbol  c= (c,\dots, c)$, 
	\item $\frac{\boldsymbol  x}{\boldsymbol  y}=\Big(\frac{x_1}{y_1},\dots, \frac{x_d}{y_d}\Big)$ if each $y_i\neq 0$, and 
	\item  $\partial_{\boldsymbol x}^{\ba} = \prod_{j=1}^d \partial_{x_j}^{\alpha_j}=\prod_{j=1}^d \partial_{j}^{\alpha_j}$.
\end{itemize}
Let $\bep = (\epsilon_1,\dots, \epsilon_d)$, where each $\epsilon_j$ is a positive dyadic number. We use $Q_{\bep}$ to denote the box
\[
Q_{\bep} = \prod_{j=1}^d [\epsilon_j, 8 \epsilon_j].
\]
To fully exploit the nondegeneracy condition \eqref{v-condition}, we define the following quantity:   
for any subset $S$ of $\mathbb R^d$, let  
\eq{
\| \phi\|_{V(S)} =\inf_{\boldsymbol  x\in S} \max_{i\neq j} | x_i x_j \partial_i\partial_j \phi(\boldsymbol  x)|.
}
The following lemma is key to establishing Theorem \ref{main}. It asserts that under the nondegeneracy condition \eqref{v-condition}, one can control the absolute value of some mixed partials of $\phi$ from below in each box $Q_{\bep}$. The proof can be found in Section \ref{POL}.    
\begin{lemma}\label{LemmaKey}
Let $\phi$ be real analytic, satisfying \eqref{v-condition}. Then there is a neighborhood $U$ of $\bz$ and a positive constant $K$
 such that for all $Q_{\bep}\subset U$
 \begin{align}
 \| \phi\|_{V(Q_{\bep})} \geq K \bep ^{\ba},\quad {\rm for \,\, all} \quad  \ba\in \mathcal V(\phi).
 \end{align}
\end{lemma}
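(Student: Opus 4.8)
The plan is to argue by compactness and contradiction, exploiting the fact that the nondegeneracy hypothesis \eqref{v-condition} is a statement about the various "face parts" $\phi_F$ of $\phi$, and that these face parts arise precisely as the scaling limits of $\phi$ along the nonisotropic dilations that make a given compact face homogeneous. Fix a vertex $\ba\in\mathcal V(\phi)$. Suppose the conclusion fails for this $\ba$: then there is a sequence of boxes $Q_{\bep^{(n)}}$ shrinking toward $\bz$ (so $\bep^{(n)}\to\bz$, possibly after passing to a subsequence along which each coordinate either tends to $0$ or is comparable to the others in a fixed ratio) together with points $\boldsymbol x^{(n)}\in Q_{\bep^{(n)}}$ at which $\max_{i\neq j}|x_i^{(n)}x_j^{(n)}\,\p_i\p_j\phi(\boldsymbol x^{(n)})|< \tfrac1n\,(\bep^{(n)})^{\ba}$. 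Rescaling by writing $\boldsymbol x^{(n)} = \bep^{(n)}\boldsymbol y^{(n)}$ with $\boldsymbol y^{(n)}\in[\bone,8\cdot\bone]$ (componentwise), we pass to a further subsequence so that $\boldsymbol y^{(n)}\to\boldsymbol y^{*}$ with every coordinate $y^{*}_j\in[1,8]$, in particular $\boldsymbol y^*$ lies off every coordinate hyperplane.

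The key step is to identify the limiting object. For each $n$, the dilation $\boldsymbol x\mapsto \bep^{(n)}\boldsymbol x$ acts on the monomials of $\phi$ by $\boldsymbol x^{\bb}\mapsto(\bep^{(n)})^{\bb}\boldsymbol x^{\bb}$, and among all $\bb\in\supp(\phi)$ the vertex $\ba$ is selected by an appropriate choice of weight: there is a covector $\boldsymbol w^{(n)}$ (a normal to a supporting hyperplane of $\mathcal N(\phi)$ at $\ba$) for which $(\bep^{(n)})^{\bb}\le (\bep^{(n)})^{\ba}$ with equality only on the face picked out by $\boldsymbol w^{(n)}$; passing to a subsequence so that the relevant face $F$ is constant in $n$, we get that $(\bep^{(n)})^{-\ba}\phi(\bep^{(n)}\boldsymbol x)\to \phi_{F}(\boldsymbol x)$ uniformly on compact subsets of $(0,\infty)^d$, and the same for all partial derivatives. (Here one uses real analyticity to control the tail of the Taylor series uniformly once $U$ is chosen small enough, so that the non-face monomials genuinely contribute $o(1)$ after rescaling and term-by-term differentiation is legitimate.) Applying this with the scaled second derivatives, $x_i^{(n)}x_j^{(n)}\p_i\p_j\phi(\boldsymbol x^{(n)}) = (\bep^{(n)})^{\ba}\big[\,y_i^{(n)}y_j^{(n)}\,\p_i\p_j\big((\bep^{(n)})^{-\ba}\phi(\bep^{(n)}\boldsymbol y)\big)\big|_{\boldsymbol y=\boldsymbol y^{(n)}}\big]$, so dividing by $(\bep^{(n)})^{\ba}$ and letting $n\to\infty$ yields $y^{*}_iy^{*}_j\,\p_i\p_j\phi_F(\boldsymbol y^{*})=0$ for every $i\neq j$, hence $\p_i\p_j\phi_F(\boldsymbol y^*)=0$ for all $i\neq j$ since $\boldsymbol y^*$ has no zero coordinate. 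But $F$ is a compact face of $\mathcal N(\phi)$, so $F\in\mathcal F(\phi)$, and \eqref{v-condition} forces $\boldsymbol y^{*}$ onto a coordinate hyperplane — contradicting $y^*_j\in[1,8]$ for all $j$.

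Finally I would note that the constant $K$ and the neighborhood $U$ can be taken uniform over the finitely many vertices $\ba\in\mathcal V(\phi)$: the contradiction argument above produces, for each $\ba$ and each face $F$ of which $\ba$ is a vertex, a positive lower bound, and one takes $K$ to be the minimum of these finitely many constants and $U$ to be the intersection of the finitely many neighborhoods. I expect the main obstacle to be making the scaling-limit step fully rigorous: one must show that for an arbitrary sequence of boxes $Q_{\bep^{(n)}}\to\{\bz\}$, after passing to subsequences, the rescaled functions converge to the face part $\phi_F$ of a genuine compact face $F$ containing $\ba$, uniformly together with derivatives, and that the "error" monomials outside $F$ contribute negligibly — this requires a careful bookkeeping of the polyhedral geometry (which weights $\boldsymbol w^{(n)}$ select which faces, and why finitely many faces suffice) together with the uniform convergence of the Taylor series of $\phi$ on the small neighborhood $U$.
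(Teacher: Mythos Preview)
Your compactness--contradiction strategy is natural and, in spirit, parallel to the paper's argument; but as written there is a genuine gap precisely at the step you flag as the ``main obstacle,'' and that gap is not merely a matter of bookkeeping.

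The problematic claim is that, after passing to a subsequence, $(\bep^{(n)})^{-\ba}\phi(\bep^{(n)}\boldsymbol y)\to\phi_F(\boldsymbol y)$ for some compact face $F$ containing $\ba$. This is not true in general: for each monomial $\bb$ the rescaled coefficient is $(\bep^{(n)})^{\bb-\ba}$, and along a subsequence this can converge to any value in $[0,1]$, not only to $0$ or $1$. Concretely, take $d=2$, $\phi(x_1,x_2)=x_1^2x_2+x_1x_2^2$, $\ba=(2,1)$, and $\bep^{(n)}=(2^{-n},2^{-n-1})$. Then $(\bep^{(n)})^{(1,2)-(2,1)}=\tfrac12$ for every $n$, so the rescaled limit is $y_1^2y_2+\tfrac12\,y_1y_2^2$, which is not $\phi_F$ for any face $F$ of $\mathcal N(\phi)$. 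Since hypothesis \eqref{v-condition} is stated only for the genuine face polynomials $\phi_F$, you cannot apply it directly to such a limit. (There is also a smaller issue: you assert $(\bep^{(n)})^{\bb}\le(\bep^{(n)})^{\ba}$ without justification. This is harmless once you observe that if the lemma fails for some vertex it also fails for the vertex maximizing $\bb\mapsto(\bep^{(n)})^{\bb}$, so one may pass to a subsequence on which $\ba$ is that maximizing vertex; but you should say so.)

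The paper's proof confronts exactly this difficulty and resolves it constructively. It introduces the notion of an \emph{$n$-dominated} $\bep$ (one for which there is a compact face $F$ with $\bep^{\bb}=\bep^{\bb'}$ for all $\bb,\bb'\in F$ and $\bep^{\ba}\le K_n\bep^{\bb}$ off $F$), and shows via Proposition~\ref{scaleprop} that every sufficiently small $\bep$ is within a bounded multiplicative factor of an $n$-dominated $\bep'$. In your language, Proposition~\ref{scaleprop} is exactly the device that replaces $\bep^{(n)}$ by a nearby $\bep'^{(n)}$ for which all the intermediate limits in $(0,1)$ are pushed to $1$ (absorbed into a larger face) or to $0$, so that the rescaled function really does converge to some $\phi_F$. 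Once that adjustment is made, your contradiction argument goes through verbatim and is equivalent to the paper's direct estimate. So your plan is salvageable, but the missing ingredient is not routine: it is precisely the content of Proposition~\ref{scaleprop} and the inductive $n$-domination argument in Section~\ref{POL}.
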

We also need control of the absolute value of mixed derivatives of $\phi $ from above: 
\begin{lemma}\label{LemmaUpper}
There is a neighborhood $U$ of $\bz$ and a constant $K'$ such that for all ${\boldsymbol a }\in \{0, 1, 2, 3\}^d$ and all $Q_{\bep}\subset U$, 
\begin{align}
\sup_{\boldsymbol  x\in Q_{\bep}}|{\boldsymbol x} ^{\boldsymbol a}  \partial_{\boldsymbol x}^{\boldsymbol a} \phi({\boldsymbol  x}) | \leq 
K' \max_{\ba\in \mathcal V(\phi)} \bep^{\ba}.
\end{align} 
\end{lemma}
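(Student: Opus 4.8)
The plan is to estimate each quantity $\abs{{\boldsymbol x}^{\boldsymbol a} \partial_{\boldsymbol x}^{\boldsymbol a} \phi({\boldsymbol x})}$ on $Q_{\bep}$ term-by-term using the Taylor expansion $\phi({\boldsymbol x}) = \sum_{\bb} c_{\bb} {\boldsymbol x}^{\bb}$, and then show that every monomial contribution is dominated by $\max_{\ba \in \mathcal V(\phi)} \bep^{\ba}$ up to a uniform constant. First I would fix a polydisc $U_0 = \{\abs{x_j} \le \rho\}$ on which the Taylor series of $\phi$ converges absolutely, and note that for ${\boldsymbol a} \in \{0,1,2,3\}^d$ one has, for each multi-index $\bb$ in the support of $\phi$ with $\bb \ge {\boldsymbol a}$ (componentwise), the identity ${\boldsymbol x}^{\boldsymbol a} \partial_{\boldsymbol x}^{\boldsymbol a} ({\boldsymbol x}^{\bb}) = \big(\prod_{j} \tfrac{\beta_j!}{(\beta_j - a_j)!}\big) {\boldsymbol x}^{\bb}$. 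Since $a_j \le 3$, the combinatorial factor is bounded by $\prod_j \beta_j^3 \le C_\rho \, (4/\rho)^{\abs{\bb}}$ or, more simply, absorbed into a geometrically convergent tail once $\rho$ is shrunk; thus on any $Q_{\bep} \subset U_0$,
\begin{align*}
\sup_{{\boldsymbol x} \in Q_{\bep}} \abs{{\boldsymbol x}^{\boldsymbol a} \partial_{\boldsymbol x}^{\boldsymbol a} \phi({\boldsymbol x})} \lesssim \sum_{\bb \in \supp(\phi)} \abs{c_{\bb}} \, (2\beta)^{\boldsymbol 3} \, (8\bep)^{\bb} \lesssim \sum_{\bb \in \supp(\phi)} \abs{\tilde c_{\bb}}\, \bep^{\bb},
\end{align*}
where the $(8)^{\bb}$ factors and the polynomial-in-$\bb$ factors have been folded into rescaled coefficients $\tilde c_{\bb}$ of a real analytic function still convergent on a slightly smaller polydisc (this uses that $\bep$ is small, so $Q_{\bep}$ lies well inside $U_0$).

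The heart of the matter is then the purely geometric claim: there is a constant $C$ and a neighborhood of $\bz$ such that for every $\bb \in \supp(\phi)$ and every sufficiently small dyadic $\bep$,
\begin{align*}
\bep^{\bb} \le C \max_{\ba \in \mathcal V(\phi)} \bep^{\ba}.
\end{align*}
This follows from the definition of the Newton polyhedron: each $\bb \in \supp(\phi)$ lies in $\mathcal N(\phi)$, hence $\bb = \sum_k t_k \ba_k + {\boldsymbol r}$ for a convex combination $\sum_k t_k = 1$, $t_k \ge 0$, of vertices $\ba_k \in \mathcal V(\phi)$ and some ${\boldsymbol r} \in \mr_{\ge}^d$. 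Since $\epsilon_j \le 1$ (which holds once $U$ is small enough that $Q_{\bep} \subset U$ forces each $\epsilon_j \le \tfrac18$), we have $\bep^{{\boldsymbol r}} \le 1$ and, by the weighted AM–GM inequality applied to the exponents, $\bep^{\bb} \le \bep^{\sum_k t_k \ba_k} = \prod_k (\bep^{\ba_k})^{t_k} \le \sum_k t_k\, \bep^{\ba_k} \le \max_{\ba \in \mathcal V(\phi)} \bep^{\ba}$. Summing the earlier term-by-term bound against the convergent series $\sum_{\bb} \abs{\tilde c_{\bb}} < \infty$ then yields the claim with $K' = C \sum_{\bb} \abs{\tilde c_{\bb}}$.

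The main obstacle I anticipate is bookkeeping rather than conceptual: one must ensure that after differentiating and multiplying by ${\boldsymbol x}^{\boldsymbol a}$, the polynomial-growth combinatorial factors $\prod_j \beta_j!/(\beta_j - a_j)!$ really are absorbed without destroying convergence — this forces the choice of $U$ to depend on the radius of convergence of $\phi$, and one should be slightly careful that the dyadic restriction "$Q_{\bep} \subset U$" genuinely gives the quantitative smallness $8\epsilon_j \le \rho/2$ (say) needed to run the geometric series. A secondary point is that $\supp(\phi)$ may be infinite, so the reduction to finitely many vertices in the maximum must be done via the convex-combination argument above rather than by any finite enumeration; once that is in place the estimate is uniform in $\bep$.
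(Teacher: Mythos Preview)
Your approach is exactly what the paper has in mind: the paper's proof is the one-sentence remark that the lemma ``follows directly from the analyticity of the function $\phi$ and the fact that all $\ba$ in the Taylor support of $\phi$ are either convex combinations of vertices or correspond to terms of higher order than any such convex combination,'' and your proposal is a fleshed-out version of precisely this.

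There is, however, one genuine bookkeeping slip in your final step. After establishing $\bep^{\bb}\le \max_{\ba\in\mathcal V(\phi)}\bep^{\ba}$, you conclude by setting $K'=C\sum_{\bb}|\tilde c_{\bb}|$; but $\tilde c_{\bb}\asymp |c_{\bb}|\cdot\mathrm{poly}(\bb)\cdot 8^{|\bb|}$, and $\sum_{\bb}|\tilde c_{\bb}|$ converges only if the radius of convergence $\rho$ exceeds $8$, which is not assumed. The smallness of $\bep$ that you invoke guarantees $\sum_{\bb}|\tilde c_{\bb}|\bep^{\bb}<\infty$, not $\sum_{\bb}|\tilde c_{\bb}|<\infty$, and the crude bound $\bep^{\bb}\le\max_{\ba}\bep^{\ba}$ discards exactly the decay you need. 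The fix is easy and is implicit in the paper's phrase ``terms of higher order'': let $M=\max_{\ba\in\mathcal V(\phi),\,j}\alpha_j$, so that in $\bb=\sum_k t_k\ba^k+\boldsymbol r$ one has $r_j\ge\max(0,\beta_j-M)$; then
\[
\bep^{\bb}\le \Big(\prod_j \epsilon_j^{\max(0,\beta_j-M)}\Big)\max_{\ba\in\mathcal V(\phi)}\bep^{\ba},
\]
and choosing $U$ so that $8\epsilon_j\le \rho/2$ makes $\sum_{\bb}|c_{\bb}|\,\mathrm{poly}(\bb)\,8^{|\bb|}\prod_j\epsilon_j^{\max(0,\beta_j-M)}$ bounded uniformly in $\bep$ by comparison with the convergent series at radius $\rho/2$. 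Equivalently, split $\phi=P_m+R_m$ as the paper does elsewhere and treat the finite polynomial part and the analytic tail separately.
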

The proof of this lemma follows directly from the analyticity of the function $\phi$ and the fact that all $\ba$ in the Taylor support of $\phi$ are either convex combinations of verticies or correspond to terms of higher order than any such convex combination.  
Now let $U$ be as in Lemma \ref{LemmaKey} and Lemma \ref{LemmaUpper}. 
By coupling these two lemmas with the Mean Value Theorem, one can prove a slightly stronger version of Lemma \ref{LemmaKey}: 
\begin{corollary}\label{SubDecomposition}
There exists $N\in\mathbb N$ depending on $K$, $K'$ and $\phi$, such that the following holds. Each $Q_{\bep}\subset U$ can be dyadically decomposed into a collection of  
$2^{dN}$ congruent boxes $Q_{\bep, l}$ for $ {1\leq l\leq 2^{dN}} $ such that for each $Q_{\bep, l}$ and for all $\ba\in \mathcal V(\phi)$, there is a fixed pair $(i,j)$ such that 
  \begin{align}\label{est_KK}
 \inf_{\boldsymbol  x\in  2 Q_{\bep,l}} | x_i x_j \partial_i\partial_j \phi(\boldsymbol  x)| \geq  \frac{K}{2} \bep ^{\ba}.
 \end{align}
 \end{corollary}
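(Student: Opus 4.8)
The plan is to produce the decomposition by subdividing $Q_{\bep}$ finely enough that, on each piece, a \emph{single} pair $(i,j)$ already realizes, up to a factor $2$, the pointwise lower bound supplied by Lemma \ref{LemmaKey}. Write $h_{ij}(\boldsymbol x) = x_i x_j \partial_i \partial_j \phi(\boldsymbol x)$ for $i \neq j$ and set $M_{\bep} = \max_{\ba \in \mathcal V(\phi)} \bep^{\ba}$. Since Lemma \ref{LemmaKey} gives $\|\phi\|_{V(Q_{\bep})} \geq K \bep^{\ba}$ for every vertex $\ba$, in fact $\inf_{\boldsymbol x \in Q_{\bep}} \max_{i \neq j} |h_{ij}(\boldsymbol x)| \geq K M_{\bep}$; hence, because $M_{\bep} \geq \bep^{\ba}$ for all $\ba \in \mathcal V(\phi)$, it suffices to find, on each subbox $Q_{\bep,l}$, one pair $(i,j)$ (depending on $l$ but not on $\ba$) with $\inf_{2Q_{\bep,l}} |h_{ij}| \geq \tfrac{K}{2} M_{\bep}$.

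The key point is that each $h_{ij}$ is \emph{multiplicatively slowly varying} on $Q_{\bep}$. For any index $k$, the function $x_k \partial_k h_{ij}$ is a sum of at most two terms of the form $\boldsymbol x^{\boldsymbol a}\partial_{\boldsymbol x}^{\boldsymbol a}\phi$ with $\boldsymbol a \in \{0,1,2,3\}^d$: namely $\boldsymbol a = e_i + e_j + e_k$ when $k \notin \{i,j\}$, and $h_{ij}$ itself ($\boldsymbol a = e_i+e_j$) together with $\boldsymbol a = 2e_i+e_j$ or $e_i+2e_j$ when $k \in \{i,j\}$. After shrinking $U$ so that a fixed dilate of every $Q_{\bep} \subset U$ still lies in the neighborhoods of Lemmas \ref{LemmaKey} and \ref{LemmaUpper}, Lemma \ref{LemmaUpper} then yields a constant $C_1$, depending only on $d$ and $\phi$, with $|x_k \partial_k h_{ij}(\boldsymbol x)| \leq C_1 K' M_{\bep}$ for all $i \neq j$, all $k$, and all $\boldsymbol x$ in that fixed dilate. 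Now partition each edge of $Q_{\bep} = \prod_j[\epsilon_j, 8\epsilon_j]$ into $2^N$ equal subintervals, obtaining $2^{dN}$ congruent subboxes $Q_{\bep,l}$; each such subbox, and its dilate $2Q_{\bep,l}$, has multiplicative width $\leq 1 + C_2 2^{-N}$ in every coordinate, with $C_2$ absolute. Passing to logarithmic coordinates $u_k = \log x_k$, in which $x_k\partial_k = \partial_{u_k}$, and integrating along a polygonal path inside $2Q_{\bep,l}$, we obtain for all $\boldsymbol x, \boldsymbol y \in 2Q_{\bep,l}$
\[
|h_{ij}(\boldsymbol x) - h_{ij}(\boldsymbol y)| \leq d\, \log(1 + C_2 2^{-N})\, \sup_{2Q_{\bep,l}} \max_k |x_k \partial_k h_{ij}| \leq C_3\, 2^{-N}\, K' M_{\bep},
\]
where $C_3$ absorbs $C_1$ and depends only on $d$ and $\phi$, \emph{not} on $\bep$.

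Now fix $N \in \mathbb N$ so large that $C_3 2^{-N} K' < K/2$; this choice depends only on $K$, $K'$, and $\phi$. Given a subbox $Q_{\bep,l}$, apply Lemma \ref{LemmaKey} at its center $\boldsymbol c_l \in Q_{\bep} \subset U$ to get a pair $(i,j)$ with $|h_{ij}(\boldsymbol c_l)| \geq K M_{\bep}$. For every $\boldsymbol x \in 2Q_{\bep,l}$ the oscillation estimate above then gives
\[
|h_{ij}(\boldsymbol x)| \geq |h_{ij}(\boldsymbol c_l)| - |h_{ij}(\boldsymbol x) - h_{ij}(\boldsymbol c_l)| \geq K M_{\bep} - \tfrac{K}{2} M_{\bep} = \tfrac{K}{2} M_{\bep} \geq \tfrac{K}{2}\, \bep^{\ba}
\]
for every $\ba \in \mathcal V(\phi)$, which is precisely \eqref{est_KK}.

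\textbf{Main obstacle.} The one genuine subtlety is that the number $N$ of subdivisions must be independent of $\bep$. This works exactly because both the lower bound of Lemma \ref{LemmaKey} and the derivative bound of Lemma \ref{LemmaUpper} are governed by the \emph{same} scale $M_{\bep}$, so the ratio that $N$ must beat is scale invariant; carrying this out cleanly requires the logarithmic (multiplicative) form of the mean value theorem rather than the additive one. A secondary technical point is keeping the dilates $2Q_{\bep,l}$ inside the domain of validity of the two lemmas, which is arranged by the initial shrinking of $U$.
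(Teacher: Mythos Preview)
Your proof is correct and follows essentially the same approach as the paper's own outline: subdivide $Q_{\bep}$ dyadically, use Lemma~\ref{LemmaKey} to locate a good pair $(i,j)$ at a point of each subbox, then propagate the lower bound to all of $2Q_{\bep,l}$ via Lemma~\ref{LemmaUpper} and the Mean Value Theorem. Your explicit use of logarithmic coordinates (so that $x_k\partial_k$ becomes $\partial_{u_k}$) is a clean way to make the scale invariance of the required $N$ transparent, and your bookkeeping on $x_k\partial_k h_{ij}$ and on the dilates $2Q_{\bep,l}$ fills in precisely the details the paper leaves to the reader.
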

 We only outline the proof of this corollary here and leave the details to the interested reader. Decompose $Q_{\bep}$ into $2^{dN}$ congruent boxes $Q_{\bep, l}$ of dimensions $2^{-N}\bep$, with $N$ to be determined momentarily.  For each $l$, by Lemma \ref{LemmaKey}, there exist  ${\boldsymbol x}\in Q_{\bep, l}$ and a pair $(i,j)$, such that $|x_ix_j\partial_{i}\partial_{j}\phi({\boldsymbol x})| \geq 
K \bep ^{\ba}$, for all  $ \ba\in \mathcal V(\phi)$. By choosing $N$ large enough (independent of $\bep$), \eqref{est_KK} is a consequence of this estimate, Lemma \ref{LemmaUpper}, line integrals, and the Mean Value Theorem.  
 
The main analytic tool to be employed is the following operator van der Corput lemma due to Phong and Stein \cite{PS94-1, PS94-2}. The proof can be found throughout the literature;  see, for example, \cite{GR04}.
 \begin{lemma}\label{OS}
Let $\chi(x,y)$ be a smooth function supported in a box with dimensions $\delta_1 \times \delta_2$ such that   
$|\partial_y^l \chi(x,y)| \leq C_1 \delta_2^{-l} 
$
for $l =0,1, 2$ and some $C_1>0$. 
Let $\mu>0$ and $S(x,y)$ be a smooth function s.t. for all $(x,y)$ in the support of $\chi,$
\begin{align*}
C_2 \mu \leq |\partial_x\partial_yS(x,y)| \leq C_3 \mu
\q\q\mbox{and}\q\q
 |\partial_x\partial_y^l S(x,y)| \leq C_3 \frac{\mu}{\delta_2^{l}} \q\mbox{for}\q l=1,2\,.
\end{align*}
Then the operator defined by 
\begin{align*}
T_\lambda f(x)  =\int_{-\infty}^\infty e^{i\lambda S(x,y)} \chi (x,y)f(y)dy 
\end{align*}
satisfies
\begin{align}\label{PS2}
\|T_\lambda f\|_2 \leq C |\lambda\mu|^{-\frac 1 2 }\|f\|_2,
\end{align}
where the constant $C$ depends on $C_1, C_2$, and $C_3$, but is independent of $\mu$, $\lambda$ and other information of the phase $S$. 
\end{lemma}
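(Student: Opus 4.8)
The plan is to prove the estimate by the classical $TT^{\ast}$ method combined with Schur's test. After a preliminary affine change of variables in $x$ and $y$ — under which the asserted inequality is scale invariant — one may assume that the $y$-side of the box has unit length and that $\mu=1$, so that the hypotheses become $|\partial_y^l\chi|\lesssim 1$ for $l\le 2$, $|\partial_x\partial_y S|\asymp 1$ with a fixed sign on $\operatorname{supp}\chi$, and $|\partial_x\partial_y^2 S|\lesssim 1$; the goal is then the scale-invariant statement $\|T_\lambda\|_{L^2\to L^2}\lesssim|\lambda|^{-1/2}$.

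First I would form $T_\lambda T_\lambda^{\ast}$, whose integral kernel is
\[
K(x,x')=\int e^{i\lambda\big(S(x,y)-S(x',y)\big)}\,\chi(x,y)\,\overline{\chi(x',y)}\,dy .
\]
The crucial observation is that the $y$-phase $\Psi(y):=S(x,y)-S(x',y)$ satisfies, by the fundamental theorem of calculus, $\partial_y\Psi(y)=\int_{x'}^{x}\partial_x\partial_y S(t,y)\,dt$, so that $|\partial_y\Psi(y)|\asymp|x-x'|$ — here the constant sign of $\partial_x\partial_y S$ is used — and, by the same identity applied to $\partial_x\partial_y^2 S$, $|\partial_y^2\Psi(y)|\lesssim|x-x'|$. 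Writing $e^{i\lambda\Psi}=\big(i\lambda\partial_y\Psi\big)^{-1}\partial_y e^{i\lambda\Psi}$ and integrating by parts in $y$, the bounds on $\partial_y^l\chi$ together with those on $\partial_y\Psi$ and $\partial_y^2\Psi$ give the nonstationary-phase bound $|K(x,x')|\lesssim(\lambda|x-x'|)^{-1}$; a further integration by parts on the dominant term (the one not producing a third $y$-derivative of $\Psi$) upgrades this to $|K(x,x')|\lesssim(\lambda|x-x'|)^{-2}$ for $|x-x'|\gtrsim\lambda^{-1}$. Combined with the trivial estimate $|K(x,x')|\lesssim 1$ coming from the unit-length $y$-support, this yields $|K(x,x')|\lesssim\min\big(1,(\lambda|x-x'|)^{-2}\big)$.

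Finally, since $\int_{\R}\min\big(1,(\lambda|x-x'|)^{-2}\big)\,dx'\lesssim\lambda^{-1}$ uniformly in $x$, and $K(x,x')=\overline{K(x',x)}$ so that the other Schur quantity obeys the same bound, Schur's test gives $\|T_\lambda T_\lambda^{\ast}\|_{L^2\to L^2}\lesssim\lambda^{-1}$, whence $\|T_\lambda\|_{L^2\to L^2}=\|T_\lambda T_\lambda^{\ast}\|_{L^2\to L^2}^{1/2}\lesssim|\lambda|^{-1/2}$; undoing the rescaling reinstates the factor $\mu^{-1/2}$ and produces \eqref{PS2}, with $C$ depending only on $C_1,C_2,C_3$.

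I expect the main obstacle to be the kernel estimate, and more precisely the fact that only two $y$-derivatives of $\chi$ and of the mixed data of $S$ are available: a single integration by parts yields only $(\lambda|x-x'|)^{-1}$, which through Schur's test would cost a logarithm of $\lambda$, so obtaining the sharp exponent $-1/2$ without that logarithmic loss is the genuinely delicate point. It is handled by the extra integration by parts indicated above — legitimate because the term it generates involves only $\partial_y^2\chi$ and $\partial_y^2\Psi$ — combined with a dyadic decomposition of the $x'$-integral in $|x-x'|$ and summation; an alternative is to replace the $TT^{\ast}$/Schur step by a Cotlar--Stein almost-orthogonality decomposition of $T_\lambda$ in the $y$-variable. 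Everything else is routine bookkeeping.
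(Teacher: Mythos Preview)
The paper does not give its own proof of this lemma; it simply records it as a known tool and refers the reader to the literature (Phong--Stein \cite{PS94-1,PS94-2} and Greenblatt \cite{GR04}). Your $TT^{\ast}$/Schur outline is exactly the standard route used there, and the reduction by rescaling, the identity $\partial_y\Psi(y)=\int_{x'}^{x}\partial_x\partial_y S(t,y)\,dt$ giving $|\partial_y\Psi|\asymp|x-x'|$, and the trivial size bound $|K|\lesssim 1$ are all correct.

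The gap is in your kernel estimate. After one integration by parts the amplitude becomes $b=\partial_y(a/\Psi')=a'/\Psi'-a\Psi''/(\Psi')^2$, and \emph{both} summands have pointwise size $\asymp|x-x'|^{-1}$ under the hypotheses (since $|\Psi''|\lesssim|x-x'|$ while $|\Psi'|\asymp|x-x'|$). There is no ``dominant'' term: leaving the piece $a\Psi''/(\Psi')^2$ untouched still contributes $(\lambda|x-x'|)^{-1}$ to $K$, so integrating by parts only the other piece cannot upgrade the bound to $(\lambda|x-x'|)^{-2}$. If instead you integrate the full $b/\Psi'$ by parts once more, the computation of $\partial_y(b/\Psi')$ unavoidably produces the term $-a\Psi'''/(\Psi')^3$; there is no algebraic cancellation that removes it. Thus your assertion that ``the term it generates involves only $\partial_y^2\chi$ and $\partial_y^2\Psi$'' is not correct, and with only the $(\lambda|x-x'|)^{-1}$ bound the Schur integral $\int_{|t|\le\delta_1}\min(1,(\lambda|t|)^{-1})\,dt$ picks up the logarithm you were trying to avoid.

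The fix is not a different trick but an extra hypothesis: one also needs $|\partial_x\partial_y^3 S|\lesssim\mu/\delta_2^2$, which bounds $|\Psi'''|\lesssim|x-x'|$ and makes the offending term the same size as the others, yielding $|K|\lesssim(\lambda|x-x'|)^{-2}$ and hence a convergent Schur integral. This is consistent with how the lemma is actually used in the paper: in Section~\ref{POM} the authors verify $|x_i x_j^{b}\partial_i\partial_j^{b}\phi|\lesssim\bep^{\bb}$ for $b=1,2,3$ before invoking Lemma~\ref{OS}, i.e.\ they supply exactly the third mixed derivative bound. So your argument is the right one once you drop the claim about avoiding $\Psi'''$ and instead use that additional control; the lemma as literally stated (only $l=1,2$) is slightly under-hypothesized for the two-step integration-by-parts proof.
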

The rest of this paper is organized as follows. Lemma \ref{LemmaKey} is proved in Section \ref{POL}. Estimates of $\Lambda (\boldsymbol f)$ are established first for a single box in Section \ref{POM}. 
The main tool used to sum over all boxes is Lemma \ref{linear}, which is proved in Section \ref{LP}. The main theorem will be established in Section \ref{FR}.

%
%
%
%
%
%
%
%
%
%
%
%
%
%
%

\section{Proof of lemma \ref{LemmaKey}}\label{POL}
The methods below are very similar to those in the first author's thesis \cite{GIL16}. For the rest of the section, we write $\phi=P_m+R_m,$ where $P_m$ is the Taylor polynomial at the origin of $\phi$ of degree at most $m$ and $R_m$ is the remainder. The integer $m$ is chosen so that $\mathcal N(\phi)=\mathcal N(P_m);$ $m$ always exists because the Newton polyhedron has finitely many extreme points. Write $P_m(\boldsymbol  x)=\sum_{|\ba|\le m} c_{\ba}\boldsymbol  x^{\ba}$ and $R_m(\boldsymbol  x)=\sum_{|\ba|= m} h_{\ba}(\boldsymbol  x)\boldsymbol  x^{\ba}$ for some real analytic functions $h_{\ba}$. For each $1\le i\neq j\le d$ we can write $x_ix_j\partial_i\partial_j\phi(\boldsymbol  x)$ as
\eqn{
	& x_ix_j\partial_i\partial_j\phi(\boldsymbol  x)= \sum_{|\ba|\le m} c_{\ba}' \boldsymbol  x^{\ba} + \sum_{|\ba|= m} h_{\ba}'(\boldsymbol  x) \boldsymbol  x^{\ba},\label{phij}}
where $c_{\ba}'=\alpha_i\alpha_jc_{\ba},$ and $h_{\ba}'$ depends on $i$ and $j$. For each compact $F\subset \mathcal N(\phi),$ we can write the polynomial $x_ix_j\partial_i\partial_j P_m(\boldsymbol  x)$ in (\ref{phij}) as
\eqn{\sum_{\ba\in F}c_{\ba}'\boldsymbol  x^{\ba} + \sum_{\ba\notin F}c_{\ba}'\boldsymbol  x^{\ba}.\label{splitsum}}
The goal is to show for all $\boldsymbol  x$ small enough, one may choose $F$ and $1\le i,j\le d$ so that \eqref{phij} is dominated by the sum over $\alpha \in F$ and all remaining terms are of a perturbative quality. 

\subsection{Supporting hyperplanes of $\mathcal N(\phi)$ and scaling}
The difficulty of dividing the sum \eqref{phij} into finitely many terms on a compact face $F$ of $N(\phi)$ and a remainder term of sufficiently small magnitude comes when on some box $Q_{\bep}$, there are many choices of $\ba \in N(\phi)$ such that $\bep^{\ba}$ are of roughly the same magnitude as $\bep^{\bb}=\max_{\ba\in \mathcal N(\phi)}\bep^{\ba}$ which do not belong to the face $F$ itself. To simplify the computations, we will associate faces $F$ with affine hypersurfaces $H$ in the natural way, i.e.,  $F$ and $H$ are associated to one another when $F = H \cap N(\phi)$. In this hyperplane language, the following proposition describes how to adjust a candidate ``dominant hyperplane'' when there are corresponding remainder terms which are not small enough for our purposes. 
Below, one should think of $\ba^1,\dots, \ba^n$ as vertices of a compact face of a Newton polyhedron with $\bep^{\ba^\ell}$ very close or equal to $\bep^{\bb}=\max_{\ba\in \mathcal N(\phi)}\bep^{\ba}$ in the sense  that there is some $K>0$ such that $K\bep^{\bb}\le \bep^{\ba^\ell}\le \bep^{\bb}$. For instance, the hyperplane under consideration could contain $\bb, \ba^1,\dots, \ba^{n-1},$ but not $\ba^{n}.$ In this case we can move to a hyperplane that contains all $(n+1)$ points, at the cost of moving to a larger face $F' \supseteq F$ (and later analyzing $\phi$ over a bigger box than $Q_\epsilon$ and possibly changing the indices $i$ and $j$).

	\begin{proposition}\label{scaleprop}
		Let $\bep\in (0,1)^d$ and let $\ba^1,\dotsc, \ba^{n}\in\mr^d$ be linearly independent. Assume for all $1\le k \le n$ there is a positive $K<1$ such that $K\bep^{\bb}\le \bep^{\ba^k}\le \bep^{\bb}$ for some multiindex $\bb$. There is some $b\in (0,1)$ depending only on $\ba^1,\dotsc, \ba^n$ and $K$ such that for some $\boldsymbol  y\in [b,b^{-1}]^d$ and all $1\le k \le n$ we have
		\eqn{\boldsymbol  y^{\ba^k}=\bep^{\ba^k-\bb}.\label{eqlts}}
		Moreover, if $\ba=\sum_k \lambda_k \ba^k$ for $\sum_k \lambda_k=1,$ then 
		\eqn{\boldsymbol  y^{\ba}=\bep^{\ba-\bb}. \label{scaleprop2}}
	\end{proposition}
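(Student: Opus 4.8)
The plan is to take logarithms, turning the multiplicative identities \eqref{eqlts} into a linear system, and then use linear independence of the $\ba^k$ to solve it with an a priori bound on the solution depending only on $\ba^1,\dots,\ba^n$ and $K$. Concretely, put $u_j=\log\epsilon_j$, so that $\boldsymbol u:=(u_1,\dots,u_d)\in(-\infty,0)^d$ and $\bep^{\ba}=e^{\la\ba,\boldsymbol u\ra}$ for every $\ba\in\mr^d$, where $\la\cdot,\cdot\ra$ is the usual inner product. The hypothesis $K\bep^{\bb}\le\bep^{\ba^k}\le\bep^{\bb}$ becomes $\log K\le\la\ba^k-\bb,\boldsymbol u\ra\le 0$ for each $k$. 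Searching for $\boldsymbol y$ of the form $e^{\boldsymbol v}$ with $\boldsymbol v=(v_1,\dots,v_d)$, the target identity \eqref{eqlts} is precisely the linear system $\la\ba^k,\boldsymbol v\ra=c_k$, where $c_k:=\la\ba^k-\bb,\boldsymbol u\ra\in[\log K,0]$, and the constraint $\boldsymbol y\in[b,b^{-1}]^d$ becomes $\|\boldsymbol v\|_\infty\le\log(1/b)$.

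To solve the system, let $A$ be the $n\times d$ matrix with rows $\ba^1,\dots,\ba^n$. Since these vectors are linearly independent, $A$ has full row rank, so $AA^{T}$ is invertible and $B:=A^{T}(AA^{T})^{-1}$ satisfies $AB=I_n$. Setting $\boldsymbol v:=B\boldsymbol c$ with $\boldsymbol c=(c_1,\dots,c_n)$ gives $A\boldsymbol v=\boldsymbol c$, i.e.\ $\la\ba^k,\boldsymbol v\ra=c_k$ for all $k$, which is \eqref{eqlts} after exponentiating. The crucial point is the bound
\[
\|\boldsymbol v\|_\infty\le\|\boldsymbol v\|_2\le\|B\|_{\mathrm{op}}\,\|\boldsymbol c\|_2\le\|B\|_{\mathrm{op}}\sqrt{n}\,|\log K|,
\]
using $|c_k|\le|\log K|$ for each $k$. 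Since $\|B\|_{\mathrm{op}}$ depends only on $A$, hence only on $\ba^1,\dots,\ba^n$, one then sets $b:=\exp\!\big(-\sqrt{n}\,|\log K|\,\|B\|_{\mathrm{op}}\big)\in(0,1)$, which depends only on $\ba^1,\dots,\ba^n$ and $K$, and concludes $\boldsymbol y=e^{\boldsymbol v}\in[b,b^{-1}]^d$.

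For the ``moreover'' statement, if $\ba=\sum_k\lambda_k\ba^k$ with $\sum_k\lambda_k=1$, linearity gives $\la\ba,\boldsymbol v\ra=\sum_k\lambda_k\la\ba^k,\boldsymbol v\ra=\sum_k\lambda_k c_k=\sum_k\lambda_k\la\ba^k-\bb,\boldsymbol u\ra=\la\ba-\bb,\boldsymbol u\ra$, where $\sum_k\lambda_k=1$ is exactly what is needed to collect the $\bb$ term; exponentiating yields \eqref{scaleprop2}.

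I do not anticipate a genuine obstacle here, as the argument is elementary linear algebra. The one subtlety to double-check is that the resulting $b$ depends solely on the $\ba^k$ and $K$, not on $\bep$: this is transparent once one observes that the right-hand data $c_k$ are bounded by $|\log K|$ uniformly in $\bep$, and that the right inverse $B$ is constructed from the $\ba^k$ alone. (If one prefers to avoid the pseudoinverse, any fixed linear right inverse of $A$ works, with operator norm depending only on $\ba^1,\dots,\ba^n$.)
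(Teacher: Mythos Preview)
Your proof is correct and follows essentially the same approach as the paper: take logarithms to convert \eqref{eqlts} into a linear system, use linear independence of the $\ba^k$ to produce a right inverse of the coefficient matrix, and bound the solution using $|c_k|\le|\log K|$. The only cosmetic difference is that the paper selects an invertible $n\times n$ submatrix of $A$ (setting the remaining coordinates of $\boldsymbol y$ equal to $1$), whereas you use the Moore--Penrose pseudoinverse; as you note, any fixed right inverse works, and the ``moreover'' step is handled identically in both.
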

	
	\begin{proof}
		Let $A$ be the $n\times d$ matrix with rows $\ba^1,\dotsc,\ba^{n}$. 		
		Without loss of generality, assume that the first $n$ columns of $A$ are linearly independent. Let $\boldsymbol  v\in\mr^n$ be the vector defined componentwise by $v_k=\log_2(\bep^{\ba^k-\bb})$. Consider the equation $\tilde{A}\boldsymbol  u=\boldsymbol  v$, where $\tilde{A}=(\alpha_i^j)_{1\le i,j\le n}.$ Since $\tilde{A}$ has full rank, we can invert $\tilde A$ and write $\boldsymbol  u=\tilde{A}^{-1}\boldsymbol  v$. Fixing $\rho=\|\tilde{A}^{-1}\|_{\infty \rightarrow \infty},$ we have that
		$$\|\boldsymbol  u\|_\infty\le \rho \|\boldsymbol  v\|_\infty.$$
		Therefore $-\|\boldsymbol  v\|_\infty \rho\le u_k \le \|\boldsymbol  v\|_\infty \rho$ for all $k.$ However, $\|\boldsymbol v\|_\infty \leq |\log K|$, so 
		$$K^{\rho}\le 2^{-\|\boldsymbol v\|_\infty \rho}  \le 2^{u_k} \le 2^{\|\boldsymbol v\|_\infty \rho} \le K^{-\rho }.$$
		Hence, letting $b=K^{\rho}\in (0,1)$, we see that the vector $\boldsymbol  y\in [b,b^{-1}]^d$ defined by $y_k=2^{u_k}$ for $1\le k \le n$ and $y_k=1$ otherwise satisfies the system of equations (\ref{eqlts}). Finally, (\ref{scaleprop2}) follows from rewriting $\ba-\bb=\sum_k \lambda_k(\ba^k-\bb).$
	\end{proof}
	The last part of the proposition will be useful shortly because $F\cap \supp(P_m)$ might not be a linearly independent set, but it is always contained in the affine hull of $(\dim(F)+1)$ many linearly independent vectors contained in $F.$

\subsection{The main result required for Lemma \ref{LemmaKey}}

Suppose $K_0,\ldots,K_{d-1} \in (0,1)$ are fixed constants (to be determined later) and any $d$-tuple of dyadic numbers $\bep$. We will call 
$\bep$ $n$-dominated when there is a $n$-dimensional compact face $F \subset {\mathcal N}(P_m)$ such that for all $\bb$ and $\bb'$ in $F$ and all $\ba \in {\mathcal N}(P_m) \setminus F$,
\[ \bep^{\bb} = \bep^{\bb'} \mbox{ and } \bep^{\ba} \leq K_n \bep^{\bb}. \] 
The property of $n$-domination is extremely useful because when estimating \eqref{splitsum} on the box $Q_{\bep}$, it allows a favorable pointwise upper bound of those terms not belonging to the face $F$. Unfortunately, not all dyadic $d$-tuples $\bep$ are $n$-dominated for some $n$. However, by Proposition \ref{scaleprop}, if for any given $\bep$, there is an  $n$-dimensional face $F$ in ${\mathcal N}(P_m)$ of ``dominant terms,'' i.e., such that $\bep^{\bb} = \bep^{\bb'}$ for all $\bb$ and $\bb'$ in $F$ and $\bep^{\ba} \leq \bep^{\bb}$ for all $\ba \not \in F$, then either $\bep$ is $n$-dominated, or the nearby dyadic $d$-tuple $\boldsymbol y^{-1} \bep$ has an even higher-dimensional face of dominant terms (in applying Proposition \ref{scaleprop}, simply fix $\bb$ to be any vertex in ${\mathcal V}(P_m)$ which maximizes $\bep^{\bb}$ and take $\ba_1,\ldots,\ba_n$ to be all other vertices which maximize $\bb \mapsto \bep^{\bb}$ together with any other vertices which, while not maximizing $\bb \mapsto \bep^{\bb}$ are in the forbidden range $\bep^{\ba} \geq K_n \bep^{\bb}$). Finally, note that when the face $F$ of dominant terms associated to an $\bep$ is $(d-1)$-dimensional, then automatically $\bep$ is $(d-1)$-dominated when $\bep$ is sufficiently small because there are only finitely many one-dimensional curves in $\R_{\geq}^d$ on which $\bep$ must lie to be $(d-1)$-dominated (i.e., curves defined by those nonisotropic scalings of $\R^d$ which make $\phi_F$ homogeneous for some $(d-1)$-dimensional face $F$). Therefore, by induction, we can say that for any dyadic $d$-tuple $\bep$ which is sufficiently small (depending on $K_0,\ldots,K_{d-1}$), there is some $\bep' \in (0,1)^d$ such that $\bep'$ is $n$-dominated for some $n$ and $\bep' \bep^{-1}$ has components bounded above and below by constants depending only on $\mathcal N(P_m)$ and $K_0,\ldots,K_{n-1}$.

We can now finish the proof of the main lemma. Consider once again the sums\eqref{phij} and \eqref{splitsum}. Fix any dyadic $d$-tuple $\bep$ and let $\bep'$ be the $n$-dominated $d$-tuple identified above which is close to $\bep$.  Let $\bb$ be any vertex in the dominant face $F$ associated to $\bep'$. If we define coordinates $\boldsymbol z \in \R^d$ so that $\boldsymbol x = \bep' \boldsymbol z$ for all $\boldsymbol x \in Q_{\bep}$, then
\[ \left| x_i x_j \partial_i \partial_j \phi(\boldsymbol x) - \bep'^{\bb} \sum_{\ba \in F} c'_{\ba} {\boldsymbol z}^{\ba} \right| \leq K_n {\bep'}^{\bb} \sum_{\ba \not \in F} \left|c_{\ba}' {\boldsymbol z}^{\ba} \right| + C \sum_{|\ba| = m} \left| \bep'^{\ba} {\boldsymbol z}^{\ba} \right| \]
where the constant $C$ depends only on the functions $h_{\ba}$. If we assume that $\bep$ is sufficiently small (or equivalently, that the cutoff function $\chi$ of \eqref{Multilinear} is supported sufficiently near the origin) depending on $K_0,\ldots,K_{n-1}$ and $\phi$, we may assume that
\[ C \sum_{|\ba| = m} \left| \bep'^{\ba} {\boldsymbol z}^{\ba} \right| \leq \frac{1}{3} \max_{i \neq j} \left| \bep'^{\bb} \sum_{\ba \in F} c'_{\ba} {\boldsymbol z}^{\ba} \right| \]
for every $\boldsymbol z$ such that $\bep' \boldsymbol z \in Q_{\bep}$ since by induction the coordinates of $\boldsymbol z$ are bounded away from $0$ and $\infty$, which means by the nondegeneracy hypothesis \eqref{v-condition} that
\[ \max_{i \neq j} \left| \sum_{\ba \in F} c'_{\ba} {\boldsymbol z}^{\ba} \right| \]
is bounded below uniformly in $\bep$ and $\boldsymbol z$ by a constant that depends only on $\phi$ and $K_0,\ldots,K_{n-1}$. Likewise, if $K_n$ is chosen sufficiently small depending on $K_0,\ldots,K_{n-1}$ and $\phi$, we may also assume that
\[ K_n \sum_{\ba \not \in F} \left|c_{\ba}' {\boldsymbol z}^{\ba} \right| \leq \frac{1}{3} \max_{i \neq j} \left| \sum_{\ba \in F} c'_{\ba} {\boldsymbol z}^{\ba} \right|, \] 
which finally implies that
\[ \max_{i \neq j} |x_i x_j \partial_i \partial_j \phi(\boldsymbol x)| \geq \frac{1}{3} \bep^{\bb} \left| \sum_{\ba \in F} c'_{\ba} {\boldsymbol z}^{\ba} \right| \geq C_n \bep^{\bb} \]
uniformly for all $\boldsymbol x \in Q_{\bep}$ with some constant that depends on $\phi$ as well as the choice of $K_0,\ldots,K_n$. Since $\bep'^\bb$ dominates $\bep^{\ba}$ for all $\ba \in {\mathcal V}(\phi)$ and the coordinates of $\bep'^{-1} \bep$ are bounded above and below, Lemma \ref{LemmaKey} follows.

\section{Single box estimates}\label{POM}
 Let $Q_{\bep}$ be a fixed box and $\chi_\epsilon$ be a smooth function supported in $Q_{\bep}$ with
 \begin{align}\label{ED01}
  | \partial_{\boldsymbol x}^{\boldsymbol k} \chi_{\bep}({\boldsymbol x})| \leq C_{\boldsymbol k} {\bep} ^{- \boldsymbol k},  \quad \forall \,\,{\boldsymbol k} \in \mathbb N^d   .
 \end{align}

The goal of this section is to obtain estimates of the multilinear form $\Lambda (\boldsymbol f)$ when the support of $\boldsymbol f$ is restricted to a single box $Q_{\bep}$ in the first orthant. This is achieved by coupling Corollary \ref{SubDecomposition} with Lemma \ref{OS}; this goal is stated below:  

 \begin{lemma}\label{single}
 Let $Q_{\bep}$ as in Corollary \ref{SubDecomposition}. For all $(p_1,\dots, p_d) \in [2,\infty]^d$, there holds 
 \begin{align}\label{Penn}
|\Lambda(\boldsymbol  f \chi_{{{\bep}}})| 
\lesssim \min_{\ba\in\mathcal N (\phi)}\{|\lambda {\bep}^{\ba} |^{-\frac 1 2}  {\bep}^{\frac 1 {\boldsymbol p'}}, \quad  {\bep}^{\frac 1 {\boldsymbol p'}}\}
  \prod_{1\leq j \leq d}\|f_j\|_{p_j}. 
\end{align}
\end{lemma}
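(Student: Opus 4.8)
Here is how I would prove Lemma \ref{single}. This is a plan, not a full argument.

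\medskip

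The idea is to establish two bounds for $|\Lambda(\boldsymbol f \chi_{\bep})|$ separately and then take their minimum; set $M := \max_{\ba \in \mathcal{V}(\phi)} \bep^{\ba}$. The first is the trivial bound $|\Lambda(\boldsymbol f \chi_{\bep})| \lesssim \bep^{1/\boldsymbol p'}\prod_{j} \|f_j\|_{p_j}$, which I would get at once from $\|\chi_{\bep}\|_\infty \lesssim 1$, Fubini, and Hölder's inequality on each interval $[\epsilon_j, 8\epsilon_j]$ (which gives $\int_{\epsilon_j}^{8\epsilon_j}|f_j| \lesssim \epsilon_j^{1/p_j'}\|f_j\|_{p_j}$ for every $p_j \in [1,\infty]$). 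The second is the oscillatory bound $|\Lambda(\boldsymbol f \chi_{\bep})| \lesssim |\lambda M|^{-1/2}\bep^{1/\boldsymbol p'}\prod_j \|f_j\|_{p_j}$. Granting both, \eqref{Penn} follows once one notes that $\min_{\ba \in \mathcal{N}(\phi)}|\lambda \bep^{\ba}|^{-1/2} = |\lambda M|^{-1/2}$: the function $\ba \mapsto \bep^{\ba}$ is log-linear, and since $\bep \in (0,1)^d$ and the recession cone of $\mathcal{N}(\phi)$ is $\mr_{\ge}^d$, it attains its maximum over $\mathcal{N}(\phi)$ at a vertex.

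For the oscillatory bound I would first invoke Corollary \ref{SubDecomposition}: decompose $Q_{\bep}$ into the $2^{dN}$ congruent sub-boxes $Q_{\bep,l}$ and choose a smooth partition of unity $\{\psi_l\}$ subordinate to the enlarged boxes $\{2Q_{\bep,l}\}$ with $\sum_l \psi_l \equiv 1$ on $Q_{\bep}$ and $|\partial_{\boldsymbol x}^{\boldsymbol k}\psi_l| \lesssim \bep^{-\boldsymbol k}$. Since there are only a fixed number of pieces, it suffices to bound each $\Lambda(\boldsymbol f\,\chi_{\bep}\psi_l)$. Fix $l$, let $(i,j)$ be the pair given by Corollary \ref{SubDecomposition}, and observe that on $2Q_{\bep,l}$ estimate \eqref{est_KK} together with Lemma \ref{LemmaUpper} yields $|x_i x_j \partial_i \partial_j \phi(\boldsymbol x)| \asymp M$, so $|\partial_i \partial_j \phi(\boldsymbol x)| \asymp \mu := M\epsilon_i^{-1}\epsilon_j^{-1}$; Lemma \ref{LemmaUpper} applied to the multi-index with a $1$ in slot $i$ and $\ell$ in slot $j$ likewise gives $|\partial_i \partial_j^{\ell}\phi(\boldsymbol x)| \lesssim M\epsilon_i^{-1}\epsilon_j^{-\ell} = \mu\,\epsilon_j^{1-\ell}$ for $\ell = 1,2$. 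Freezing $\boldsymbol x' := (x_k)_{k \ne i,j}$ and setting $T_{\boldsymbol x'}g(x_i) := \int e^{i\lambda \phi(\boldsymbol x)}(\chi_{\bep}\psi_l)(\boldsymbol x)\,g(x_j)\,dx_j$, the hypotheses of Lemma \ref{OS} hold uniformly in $\boldsymbol x'$ with $\delta_1 \asymp \epsilon_i$ and $\delta_2 \asymp \epsilon_j$ — using \eqref{ED01} for the cutoff and the derivative estimates just recorded for the phase (here $\epsilon_j < 1$ makes $\mu\,\epsilon_j^{1-\ell} \lesssim \mu\,\delta_2^{-\ell}$) — so $\|T_{\boldsymbol x'}g\|_{L^2} \lesssim |\lambda\mu|^{-1/2}\|g\|_{L^2} = |\lambda M|^{-1/2}(\epsilon_i\epsilon_j)^{1/2}\|g\|_{L^2}$, with constant independent of $\bep$, $\lambda$, and $\boldsymbol x'$.

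To finish I would write $\Lambda(\boldsymbol f\,\chi_{\bep}\psi_l) = \int \left( \int f_i(x_i)\,(T_{\boldsymbol x'}f_j)(x_i)\,dx_i \right)\prod_{k \ne i,j} f_k(x_k)\,d\boldsymbol x'$, apply Cauchy--Schwarz in $x_i$ inside the brackets and Fubini in $\boldsymbol x'$, to obtain $|\Lambda(\boldsymbol f\,\chi_{\bep}\psi_l)| \lesssim |\lambda M|^{-1/2}(\epsilon_i\epsilon_j)^{1/2}\,\|f_i\|_{L^2(I_i)}\,\|f_j\|_{L^2(I_j)}\prod_{k \ne i,j}\|f_k\|_{L^1(I_k)}$, where $I_m$ is the projection of $\supp(\chi_{\bep}\psi_l)$ onto the $x_m$-axis, an interval of length $\lesssim \epsilon_m$. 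Then Hölder on each $I_m$ gives $\|f_m\|_{L^2(I_m)} \lesssim \epsilon_m^{1/p_m' - 1/2}\|f_m\|_{p_m}$ for $m \in \{i,j\}$ (this is where $p_i,p_j \ge 2$ enters, to keep the exponent nonnegative) and $\|f_k\|_{L^1(I_k)} \lesssim \epsilon_k^{1/p_k'}\|f_k\|_{p_k}$ for $k \ne i,j$; collecting powers, the factor $(\epsilon_i\epsilon_j)^{1/2}$ cancels the two $-1/2$ exponents and what remains is exactly $\bep^{1/\boldsymbol p'}$. Summing over $l$ and combining with the trivial bound yields \eqref{Penn}.

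I expect the one step needing real care to be the verification that Lemma \ref{OS} applies with the scale $\mu = M\epsilon_i^{-1}\epsilon_j^{-1}$: this requires both the lower bound on $|\partial_i\partial_j\phi|$ — precisely what Corollary \ref{SubDecomposition} is designed to deliver — and the matching upper bounds on $\partial_i\partial_j\phi$ and $\partial_i\partial_j^2\phi$ relative to the box dimensions, supplied by Lemma \ref{LemmaUpper}. The rest is bookkeeping of powers of $\bep$, the only delicate point there being the identity $\tfrac{1}{2} - \tfrac{1}{p_m} = \tfrac{1}{p_m'} - \tfrac{1}{2} \ge 0$. One should also note that replacing the closed-box decomposition of Corollary \ref{SubDecomposition} by a smooth partition of unity forces its supports into the dilated boxes $2Q_{\bep,l}$, which is exactly why \eqref{est_KK} is stated there.
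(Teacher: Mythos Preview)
Your proposal is correct and follows essentially the same route as the paper: reduce to the maximizing vertex $\bb$ (your $M=\bep^{\bb}$), use Corollary~\ref{SubDecomposition} to pass to sub-boxes with a smooth partition supported in $2Q_{\bep,l}$, apply Lemma~\ref{OS} in the $(x_i,x_j)$ variables with $\mu=M\epsilon_i^{-1}\epsilon_j^{-1}$ (verifying the derivative hypotheses via Lemma~\ref{LemmaUpper}), and then H\"older to pass from the $L^2\times L^2\times (L^1)^{d-2}$ bound to $(p_1,\dots,p_d)\in[2,\infty]^d$. The only cosmetic difference is that the paper also records the $b=3$ upper bound $|x_i x_j^3\partial_i\partial_j^3\phi|\lesssim \bep^{\bb}$; depending on which formulation of the operator van der Corput lemma one cites this may or may not be needed, so you might include it for safety.
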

 Let $\bb\in \mathcal V(\phi)$ be such that ${\bep}^{\bb} \geq {\bep}^{\ba}$ for all $\ba\in \mathcal N(\phi)$.
 It suffices to show  
 \begin{align}\label{Penn01}
|\Lambda(\boldsymbol  f \chi_{{{\bep}}})| 
\lesssim \min\{|\lambda {\bep}^{\bb} |^{-\frac 1 2}  {\bep}^{\frac 1 {\boldsymbol p'}}, \quad  {\bep}^{\frac 1 {\boldsymbol p'}}\}
  \prod_{1\leq j \leq d}\|f_j\|_{p_j}. 
\end{align}
To prove this, 
let $\{Q_{{\bep}, l}\}_{1\leq l \leq 2^{Nd}}$ be the corresponding sub-decomposition as in Corollary \ref{SubDecomposition}.
 Apply again a smooth partition to $\chi_{\bep}$ and write it as the sum of $\chi_{{\bep},l}$, 
with each $\chi_{{\bep},l}$ supported in $2Q_{\bep,l}$ and satisfying estimates similar to \eqref{ED01}.    
For each $Q_{\bep,l}$, there is a fixed pair $(i,j)$ such that 
  \begin{align*}
 \inf_{\boldsymbol  x\in  2 Q_{\bep,l}} | x_i x_j \partial_i\partial_j \phi(\boldsymbol  x)| \geq \frac {K}{2} {\bep} ^{\bb} . 
 \end{align*}
Notice also for all ${\boldsymbol x}\in   2Q_{\bep,l}$ and for $b =1, 2, 3$, 
\begin{align*}
 | x_i x_j^{b} \partial_i\partial_j^b \phi({\boldsymbol x})| \leq   {K_2} {\bep} ^{\bb}
\end{align*} 
for some constant $K_2$ independent of ${\bep}$. In the above two estimates, we have applied Lemma \ref{LemmaUpper} and Corollary \ref{SubDecomposition}.
Using Fubini's Theorem,  
\begin{align*}
\Lambda(\boldsymbol  f \chi_{{{\bep},l}})  = \int_{\mathbb R^{d-2}}\left(\int_{\mathbb R^2} e^{i\lambda \phi({\boldsymbol x})}f_i(x_i)f_j(x_j) \chi_{{{\bep},l}}({\boldsymbol x})dx_i dx_j \right) \prod_{k \neq i, j}f_k(x_k) dx_k.
\end{align*} 
Applying Lemma \ref{OS} 
to the inner integral and then $L^1$ norms to other functions yields 
\begin{align}\label{Decay}
|\Lambda(\boldsymbol  f \chi_{{{\bep},l}})| 
\lesssim  |\lambda {\bep}^{\bb} {\epsilon}_i^{-1}{\epsilon}_j^{-1}|^{-\frac 1 2} \|f_i\|_2\|f_j\|_2\prod_{k\neq i,  j}\|f_k\|_1.
\end{align} 
 H\"older's inequality and the lower bounds $p_j\geq 2$ give 
\begin{align*}
|\Lambda(\boldsymbol  f \chi_{{{\bep},l}})| 
\lesssim  |\lambda {\bep}^{\bb} {{\epsilon}}_i^{-1}{{\epsilon}}_j^{-1}|^{-\frac 1 2} |{{\epsilon}}_i|^{\frac 1 2 -\frac 1 {p_i}}\|f_i\|_{p_i}
|{{\epsilon}}_i|^{\frac 1 2 -\frac 1 {p_j}}\|f_j\|_{p_j}\prod_{k\neq i,  j}|{{\epsilon}}_k|^{ 1  -\frac 1 {p_k}}\|f_k\|_{p_k},
\end{align*} 
that is,
\begin{align} \label{Est01}
|\Lambda(\boldsymbol  f \chi_{{{\bep},l}})| 
\lesssim  |\lambda {\bep}^{\bb} |^{-\frac 1 2} \prod_{1\leq k \leq d}|{{\epsilon}}_k|^{ 1  -\frac 1 {p_k}}\|f_k\|_{p_k}.
\end{align}
The first estimate in (\ref{Penn01}) follows by summing over $1\leq l \leq 2^{dN}$. 
Inserting the absolute value into the integral $|\Lambda(\boldsymbol  f \chi_{{{\bep}}})| $ and employing H\"older's inequality yields 
\begin{align}\label{Est02}
|\Lambda(\boldsymbol  f \chi_{{{\bep}}})| 
\lesssim  \prod_{1\leq j \leq d}|{{\epsilon}}_j|^{1-\frac 1 {p_j}}\|f_j\|_{p_j},
\end{align}
which is the second estimate of \eqref{Penn01}.

%
%
%
%
%
%
%
%
%
%
%
%
%
%
%

\section{Summing over all boxes}\label{LP}

To estimate the multilinear form $\Lambda (\boldsymbol f)$, we need to sum \eqref{Penn} over all $Q_{\bep}$,  
which is achieved by the following lemma: 
\begin{lemma}\label{linear}
Let $\boldsymbol z\in \mr^d$ have positive components and assume $F\subset \cn(\phi)$ of dimension $(\ell-1)$ is the lowest dimensional face containing $\boldsymbol  \gamma=\nu \boldsymbol z$ for some unique $\nu >2.$ Then the following is true for $\lambda\ge 2$: 
$$\sum_{j_1,\dots, j_d=0}^\infty \min_{\substack{N\in \{0, 1/2\}, \\\ba\in \mathcal N(\phi)}}\{\lambda^{-N}2^{\la N\ba-\boldsymbol z, \boldsymbol  j\ra }\}\lesssim \lambda^{-\frac{1}{\nu}}\log^{d-\ell}(\lambda).$$
\end{lemma}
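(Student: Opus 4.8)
The sum to be estimated is
\[
\Sigma(\lambda) := \sum_{\boldsymbol j \in \mathbb N^d} \min_{\substack{N\in\{0,1/2\},\ \ba\in\mathcal N(\phi)}} \Big\{ \lambda^{-N} 2^{\langle N\ba - \boldsymbol z, \boldsymbol j\rangle} \Big\}.
\]
The plan is to split $\mathbb N^d$ into the region where the trivial bound (taking $N=0$, $\ba$ arbitrary, so the minimum is $\le 2^{-\langle \boldsymbol z,\boldsymbol j\rangle}$, since $0 \in \mathcal N(\phi)$ in the reduced sense --- actually one should take $\ba$ a vertex and note $\langle \ba,\boldsymbol j\rangle \ge 0$) already gives geometric decay, and the region where one genuinely needs the oscillatory gain $N=1/2$. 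Concretely, let $t = \log_2\lambda$, so $\lambda^{-N}2^{\langle N\ba-\boldsymbol z,\boldsymbol j\rangle} = 2^{N(\langle\ba,\boldsymbol j\rangle - t) - \langle\boldsymbol z,\boldsymbol j\rangle}$. With $N=1/2$ the exponent is $\tfrac12(\langle\ba,\boldsymbol j\rangle - t) - \langle\boldsymbol z,\boldsymbol j\rangle$; with $N=0$ it is $-\langle\boldsymbol z,\boldsymbol j\rangle$. So the summand is $2^{-\langle\boldsymbol z,\boldsymbol j\rangle}\min\{1,\ \min_{\ba}2^{\frac12(\langle\ba,\boldsymbol j\rangle - t)}\} = 2^{-\langle\boldsymbol z,\boldsymbol j\rangle}\min\{1, 2^{\frac12(m(\boldsymbol j) - t)}\}$, where $m(\boldsymbol j) := \min_{\ba\in\mathcal V(\phi)}\langle\ba,\boldsymbol j\rangle$ (the minimum over $\mathcal N(\phi)$ is attained at a vertex). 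The threshold between the two regimes is the hypersurface $m(\boldsymbol j) = t$.

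**The core estimate.**
In the region $m(\boldsymbol j)\ge t$ the summand is $\le 2^{-\langle\boldsymbol z,\boldsymbol j\rangle}$, and on the region $m(\boldsymbol j)\le t$ it is $\le 2^{-t/2}\,2^{\frac12 m(\boldsymbol j) - \langle\boldsymbol z,\boldsymbol j\rangle}$. The key point is the relationship between $m(\boldsymbol j)$ and $\langle\boldsymbol z,\boldsymbol j\rangle$ dictated by the hypothesis $\boldsymbol\gamma = \nu\boldsymbol z$ lying on the face $F$ of dimension $\ell-1$ and on no lower face. Since $\boldsymbol z = \boldsymbol\gamma/\nu$ and $\boldsymbol\gamma\in\mathcal N(\phi)$, for every vertex $\ba$ we have $\langle\ba,\boldsymbol j\rangle \ge$ (value of the supporting functional)... more usefully: because $\boldsymbol\gamma$ lies in the relative interior of $F$, the linear functionals $\boldsymbol j\mapsto\langle\ba,\boldsymbol j\rangle$ for $\ba$ ranging over the vertices of $F$ all agree with $\langle\boldsymbol\gamma,\boldsymbol j\rangle/(\text{scaling})$ precisely on the $(d-\ell+1)$-dimensional normal cone directions, and there $m(\boldsymbol j) = \nu\langle\boldsymbol z,\boldsymbol j\rangle$ exactly; off that cone, $m(\boldsymbol j) < \nu\langle\boldsymbol z,\boldsymbol j\rangle$ strictly, with a quantitative gap. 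I would therefore decompose $\mathbb N^d$ into finitely many polyhedral sub-cones on each of which a single vertex $\ba$ realizes the minimum $m(\boldsymbol j)$, and on each such cone change to coordinates adapted to the facet structure: along $d-\ell+1$ "free" directions (the lineality of the relevant normal cone face) one has $m(\boldsymbol j) = \nu\langle\boldsymbol z,\boldsymbol j\rangle$ so the two bounds become $2^{-\langle\boldsymbol z,\boldsymbol j\rangle}$ versus $2^{-t/2}2^{(\frac\nu2 - 1)\langle\boldsymbol z,\boldsymbol j\rangle}$ --- summing the minimum of these over a one-parameter family $s = \langle\boldsymbol z,\boldsymbol j\rangle$ gives $\lesssim \lambda^{-1/\nu}$ (balance at $s \approx t/\nu$), and there are $\sim(\log\lambda)^{d-\ell}$ lattice points in the remaining $(d-\ell)$ "diagonal" directions that contribute before $m(\boldsymbol j)$ overtakes $t$; in the other $\ell-1$ directions $\langle\boldsymbol z,\boldsymbol j\rangle$ grows strictly faster than $m(\boldsymbol j)/\nu$ and one gets a convergent geometric series with no log loss. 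Assembling: $\Sigma(\lambda) \lesssim \lambda^{-1/\nu}\log^{d-\ell}\lambda$.

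**Execution details and the main obstacle.**
More carefully I would argue as follows. First reduce to a single cone $C_{\ba}$ where $m(\boldsymbol j) = \langle\ba,\boldsymbol j\rangle$; only cones $C_{\ba}$ with $\ba$ a vertex of the minimal face $F$ (or of faces meeting $\boldsymbol\gamma$) matter, the others giving strictly faster geometric decay. On such a cone, write the summand as $2^{-\langle\boldsymbol z,\boldsymbol j\rangle}\min\{1, 2^{\frac12(\langle\ba,\boldsymbol j\rangle - t)}\}$ and split at $\langle\ba,\boldsymbol j\rangle = t$. The "non-oscillatory" part $\sum_{\langle\ba,\boldsymbol j\rangle\ge t, \boldsymbol j\in C_\ba} 2^{-\langle\boldsymbol z,\boldsymbol j\rangle}$: on the directions where $\langle\ba,\cdot\rangle$ and $\nu\langle\boldsymbol z,\cdot\rangle$ coincide, $\langle\ba,\boldsymbol j\rangle\ge t \iff \langle\boldsymbol z,\boldsymbol j\rangle\ge t/\nu$, so this piece is $\lesssim (\log\lambda)^{d-\ell}\,2^{-t/\nu} = (\log\lambda)^{d-\ell}\lambda^{-1/\nu}$, the log power coming from the number of lattice points in the $(d-\ell)$ directions that are "free for $\boldsymbol z$ but not for $\ba$", each ranging up to $O(\log\lambda)$. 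The "oscillatory" part $2^{-t/2}\sum_{\langle\ba,\boldsymbol j\rangle\le t} 2^{\frac12\langle\ba,\boldsymbol j\rangle - \langle\boldsymbol z,\boldsymbol j\rangle}$: using $\langle\ba,\boldsymbol j\rangle\le\nu\langle\boldsymbol z,\boldsymbol j\rangle$ always, the exponent is $\le (\frac\nu2-1)\langle\boldsymbol z,\boldsymbol j\rangle$ with $\frac\nu2-1>0$, but the constraint $\langle\ba,\boldsymbol j\rangle\le t$ limits $\langle\boldsymbol z,\boldsymbol j\rangle\lesssim t$; summing the geometric-in-the-constrained-variable series and counting the remaining $O((\log\lambda)^{d-\ell})$ lattice directions yields again $\lesssim 2^{-t/2}2^{(\frac\nu2-1)t/\nu}(\log\lambda)^{d-\ell} = \lambda^{-1/\nu}(\log\lambda)^{d-\ell}$. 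The main obstacle --- and the step that needs genuine care rather than routine estimation --- is making precise the claim that "$\langle\ba,\boldsymbol j\rangle \le \nu\langle\boldsymbol z,\boldsymbol j\rangle$ for all vertices $\ba$, with equality exactly on a $(d-\ell+1)$-dimensional cone, and strict inequality with a uniform gap in the complementary $(\ell-1)$ directions," i.e. correctly extracting the "$\log^{d-\ell}$ versus convergent" dichotomy from the hypothesis that $F$ is the lowest-dimensional face through $\boldsymbol\gamma = \nu\boldsymbol z$; this is a convex-geometry bookkeeping argument about normal cones of $\mathcal N(\phi)$ at $\boldsymbol\gamma$, and getting the dimension count and the uniformity of constants right (independent of $\lambda$) is where the work lies. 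Everything after that is summation of geometric series and counting lattice points in dilated simplices.
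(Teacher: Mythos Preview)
Your overall strategy is sound and could be pushed through, but it differs substantially from the paper's route, and a couple of your intermediate statements are not quite right as written.

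\textbf{Comparison with the paper.} The paper does not decompose into normal cones at all. Instead it (i) handles the region $j_i > \log(\lambda)/\gamma_i$ trivially with $N=0$, (ii) replaces the remaining finite sum by an integral over a box of side $\sim\log\lambda$, (iii) chooses $\ell$ linearly independent points $\ba^1,\dots,\ba^\ell\in F$ whose convex hull contains $\boldsymbol\gamma$, and makes the single linear change of variables $\boldsymbol x = A^T\boldsymbol y$ with $A\ba^i = \bee_i$ for $i\le\ell$ and $A\bee_i=\bee_i$ for $i>\ell$. Because $\boldsymbol z=\nu^{-1}\sum\lambda_i\ba^i$, one has $A\boldsymbol z\in\mathrm{span}(\bee_1,\dots,\bee_\ell)$, so the integrand is \emph{constant} in $y_{\ell+1},\dots,y_d$; integrating those variables over the box gives exactly the factor $\log^{d-\ell}(\lambda)$. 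The remaining $\ell$-dimensional integral is then handled by a shift $\boldsymbol y\to\boldsymbol y+\log(\lambda)\bone$ (which extracts $\lambda^{-1/\nu}$) and a compactness argument: the convex combination $\theta_0(-A\boldsymbol z)+\sum\theta_i A(\ba^i/2-\boldsymbol z)=\bz$ with $\theta_0=1-2/\nu>0$ forces the minimum of the $\ell+1$ linear exponents to be strictly negative on the sphere, so the integral over $\mathbb R^\ell$ converges. This avoids all the cone-by-cone lattice counting you propose; the dimension count $d-\ell$ falls out of one linear-algebra observation rather than an analysis of normal cones.

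\textbf{Points in your sketch that need repair.} First, ``the constraint $\langle\ba,\boldsymbol j\rangle\le t$ limits $\langle\boldsymbol z,\boldsymbol j\rangle\lesssim t$'' is false whenever $\ba$ has a vanishing component (which the reduced Newton polyhedron allows, provided at least two components are positive): in the corresponding coordinate direction $\langle\ba,\cdot\rangle$ is identically zero and $\langle\boldsymbol z,\cdot\rangle$ is unbounded. This does not break your argument---the summand still decays geometrically in that direction---but the sentence as written is wrong. Second, your dimension bookkeeping in the middle paragraph is muddled: you speak of $d-\ell+1$ ``free'' directions, then separately of $(d-\ell)$ ``diagonal'' directions contributing logs, then of $\ell-1$ transverse directions; these do not add up consistently. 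The correct picture (which you in fact use later) is that on the $(d-\ell+1)$-dimensional normal cone of $F$ one has $m(\boldsymbol j)=\nu\langle\boldsymbol z,\boldsymbol j\rangle$, and on this cone the level sets of $\langle\boldsymbol z,\cdot\rangle$ are $(d-\ell)$-dimensional, producing the polynomial factor $(\log\lambda)^{d-\ell}$ in the lattice-point count; the remaining $\ell-1$ directions are genuinely transverse with a linear gap $\nu\langle\boldsymbol z,\cdot\rangle - m(\cdot)>0$, giving convergent sums. Once stated this way your plan goes through, though it is more laborious than the paper's single change of variables.
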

Choosing $N=0$ we see that for any index $j_i$,
$$\sum_{j_1=0}^\infty\cdots\sum_{j_i=\log(\lambda)/\gamma_i}^\infty\cdots \sum_{j_d=0}^\infty 2^{-\la\boldsymbol z, \boldsymbol  j\ra}\lesssim \lambda^{-\frac{z_i}{\gamma_i}}=\lambda^{-\frac{1}{\nu}}.$$
Hence, it is enough to obtain the bound 
	\eqn{\sum_{j_1=0}^{\log(\lambda)/\gamma_1}\cdots\sum_{j_d=0}^{\log(\lambda)/\gamma_d}  \min_{\substack{N\in \{0, 1/2\}, \\\ba\in \mathcal N(\phi)}}\{\lambda^{-N}2^{\la N\ba-\boldsymbol z, \boldsymbol  j\ra}\}\lesssim \lambda^{-\frac{1}{\nu}}\log^{d-\ell}(\lambda)\label{varsum}} 
Here it is more natural to work in a continuous setting: the sum in (\ref{varsum}) may be bounded above by a uniform constant times
\eqn{\int_{0}^{\log(\lambda)/\gamma_1}\cdots \int_{0}^{\log(\lambda)/\gamma_d}  \min_{\substack{N\in \{0, 1/2\}, \\\ba\in \mathcal N(\phi)}}\{\lambda^{-N}e^{\la  N\ba -\boldsymbol z, \boldsymbol  x\ra}\} d\boldsymbol  x. \label{varint}}
Since $ F\ni \boldsymbol  \gamma$ is dimension $(\ell-1)$ and is not contained in a coordinate hyperplane, there are linearly independent $\ba^1,\dots, \ba^\ell\in F$ whose convex hull contains $\boldsymbol  \gamma,$ so write 
\eqn{\boldsymbol  \gamma= \sum_{i=1}^\ell \lambda_i\ba^i.\label{convcomb}}
For $1\le i\le \ell$ let $\theta_i=2\frac{\lambda_i}{\nu}$ and $\theta_0= 1-\frac{2}{\nu}$. All $\theta_i$ are positive and their sum is $1$ by the restriction placed on $\nu$. Moreover, we can check
\eqn{\theta_0(-\boldsymbol z) +\sum_{i=1}^\ell \theta_i \Big(\frac{\ba^i}{2}-\boldsymbol z\Big)=\bz.\label{convo}}

The integral (\ref{varint}) can be bounded above by 
\eqn{\int_{0}^{\log(\lambda)/\gamma_1}\dotsi \int_{0}^{\log(\lambda)/\gamma_d} \min_{1\le i\le \ell} \{e^{-\la \boldsymbol z, \boldsymbol  x\ra}, \lambda^{-\frac{1}{2}}e^{\la \frac{\ba^i}{2} -\boldsymbol z, \boldsymbol  x\ra}\} d\boldsymbol  x.\label{varint2}}
Without loss of generality we may assume $\ba^1,\dots, \ba^\ell,\bee_{\ell+1}, \dots, \bee_d$ are linearly independent and define the invertible matrix $A$ by
$$A\ba^i = \bee_i \text { for } 1\le i \le \ell,$$
and
$$A \bee_i = \bee_i \text{ for } \ell< i\le d.$$
Note that $\la A^T \boldsymbol  x, \ba^i\ra  = x_i$ for $1\le i\le \ell.$ Let $R=\{\boldsymbol y\in\mr^d: 0\le \la A^T \boldsymbol  y,\bee_j\ra\le \log(\lambda)/\gamma_j\text{ for all } 1\le j\le d\}$. Applying the change of variables $\boldsymbol  x=A^T \boldsymbol  y,$ up to a factor depending only on $A$ the integral (\ref{varint2}) equals
\eqn{\int_{R} \min_{1\le i\le \ell}\{e^{-\la A\boldsymbol z, \boldsymbol  y\ra}, \lambda^{-\frac{1}{2}} e^{\la A(\frac{\ba^i}{2}-\boldsymbol z), \boldsymbol  y\ra}\} d\boldsymbol  y.\label{varint3}}
First integrating over directions $\ell<i\le d$,
$$\int_{\substack{ 0\le \la A^T \boldsymbol  y,\bee_i\ra\le \log(\lambda)/\gamma_i\\ \ell<i\le d}} dy_{\ell+1}\dotsm dy_d \lesssim \log^{d-\ell}(\lambda).$$
We can therefore bound (\ref{varint3}) above by 
\eqn{\log^{d-\ell}(\lambda) \int_{\mr^\ell} \Big{|}\min_{1\le i\le \ell}\{e^{-\la A\boldsymbol z, \boldsymbol  y\ra}, \lambda^{-\frac{1}{2}} e^{\la A(\frac{\ba^i}{2}-\boldsymbol z), \boldsymbol  y\ra}\}\Big{|} dy_1\dotsm dy_\ell.\label{varint4}}
Since $A\ba^i=\bee_i$ and $\sum_{i=1}^\ell \lambda_i =1,$ we see
$$\la A\boldsymbol z ,\log(\lambda)\bone\ra = \frac{1}{\nu} \log(\lambda)\sum_{i=1}^\ell \lambda_i \la A\ba^i, \bone\ra = \frac{1}{\nu} \log(\lambda).$$
Exponentiating, we obtain $e^{-\la A\boldsymbol z,\log(\lambda)\bone\ra}=\lambda^{-\frac{1}{\nu}}.$ This calculation inspires the change of variables $y\to y+\log(\lambda)\bone$, after which we factor out $\lambda^{-\frac{1}{\nu}}$ and bound (\ref{varint4}) above by $\lambda^{-\frac{1}{\nu}} \log^{d-\ell}(\lambda)$ times
$$\int_{\mr^\ell} \Big{|}\min_{1\le i\le \ell}\{e^{-\la A\boldsymbol z, \boldsymbol  y\ra}, e^{\la A(\frac{\ba^i}{2}-\boldsymbol z), \boldsymbol  y\ra}\}\Big{|} dy_1\dotsm dy_\ell.$$
Note the factor $\lambda^{-\frac{1}{2}}e^{\frac{\log(\lambda)}{2}}=1$ from the change of variables above. By (\ref{convo}), 
$$\mathbf{0} = -\theta_0A\boldsymbol z+\sum_{i=1}^\ell \theta_i A\Big(\frac{\ba^i}{2}-\boldsymbol z\Big).$$
Since $A(\frac{\ba^i}{2}-\boldsymbol z)$ for $1\le i\le \ell$ are linearly independent, 
$$\sup_{\|\boldsymbol  y\|_2=1}\min_{1\le i\le \ell} \{-\la A\boldsymbol z, \boldsymbol  y\ra, \la A\Big(\frac{\ba^i}{2}-\boldsymbol z\Big), \boldsymbol  y\ra\}<0.$$
By homogeneity, there is a constant $c=c(\ba^1,\dots, \ba^\ell, \boldsymbol z)>0$ such that
$$\min_{1\le i\le \ell} \{-\la A\boldsymbol z, \boldsymbol  y\ra, \la A\Big(\frac{\ba^i}{2}-\boldsymbol z\Big), \boldsymbol  y\ra\}<-c \|\boldsymbol  y\|_2.$$
After a polar change of variables, we can bound (\ref{varint4}) by a constant independent of $\lambda$ times
$$\lambda^{-\frac{1}{\nu}}\log^{d-\ell}(\lambda) \int_0^\infty e^{-cr}dr \lesssim \lambda^{-\frac{1}{\nu}}\log^{d-\ell}(\lambda).$$


\section{Proof of the main Theorem}\label{FR}
Theorem \ref{main} will be a direct consequence of Lemma \ref{single} and Lemma \ref{linear}. 
Write 
 $$
 \chi({\boldsymbol x}) = \sum_{sign\in\{+,-\}^d} \chi_{sign}(\boldsymbol x) \quad {\rm for \,\, all} \prod_{1\leq j\leq d}x_j \neq 0, 
 $$
where $\chi_{sign}$ is the restriction of $\chi$ to the orthant corresponding to $sign\in\{+,-\}^d$. By the triangle inequality, it suffices to prove \eqref{maineq} for each $\chi_{sign}$. Without loss of generality, we will prove the case when $\chi$ is restricted to the first orthant. Let $\chi^+ =\chi_{sign}$ with $sign =+^d$. 
 By applying a smooth partition, one can write   
 $$
 \chi^+({\boldsymbol x}) = \sum_{\bep} \chi_{\bep}(\boldsymbol x),  
 $$
where each $\chi_{\bep} $ is a smooth function supported in a dyadic box $Q_{\bep}$ with 
 \begin{align}\label{ED01}
| \partial_{\boldsymbol x}^{\boldsymbol k} \chi_{\bep}({\boldsymbol x})| \leq C_{\boldsymbol k} {\bep} ^{- \boldsymbol k},  \quad \forall \,\,{\boldsymbol k} \in \mathbb N^d   .
 \end{align}
Let $\boldsymbol p\in [2,\infty]^d$. By the triangle inequality and Lemma \ref{single},  
$|\Lambda(\boldsymbol  f \chi^+)| $ can be controlled above by a positive constant times
\[
 \sum_{\bep= 2^{-\boldsymbol j}, \,\,\boldsymbol j \in \mathbb N^d}\,\,\,\min_{\ba\in\mathcal N (\phi)}\{|\lambda {\bep}^{\ba} |^{-\frac 1 2}  {\bep}^{\frac \bone {\boldsymbol p'}}, \quad  {\bep}^{\frac \bone {\boldsymbol p'}}\}
  \prod_{1\leq k \leq d}\|f_k\|_{p_k}. 
\]
Note that all components of $\frac \bone {\boldsymbol p'}$ are positive. If $\nu>2$ is such that $\nu\frac{\bone}{\boldsymbol p'}=\frac{\boldsymbol\nu }{\boldsymbol p'}\in \mathcal N(\phi)$, then \eqref{maineq} follows from the above estimate and Lemma \ref{linear}. In particular, when $\nu>2$ one can take  $m=0$ in \eqref{maineq} if $\frac{\boldsymbol\nu}{\boldsymbol p'}$ is an interior point of $\mathcal N(\phi)$ and $m= d-\ell$ if the face of lowest dimension containing $\frac{\boldsymbol\nu}{\boldsymbol p'}$ itself has dimension $(\ell-1)$. 

It remains to show that the estimate \eqref{maineq} is sharp up to a logarithmic factor.  For convenience, let us define the dual polyhedron of $\mathcal N(\phi)$ to be the subset of $\R^d_{\geq}$ such that 
	\begin{align}
	\mathcal N(\phi)^* = \{ \boldsymbol w \in \mathbb R_\ge^d: \la \ba, \boldsymbol w \ra \geq 1,\,\,  \forall \,\,\ba\in \mathcal N(\phi)\}.
	\end{align}
By a similar construction, the double dual $\mathcal N(\phi)^{**}$ can easily be checked to equal $\mathcal N(\phi)$. 
Likewise, it is not difficult to see that there is a constant $\delta >0$, depending on $\phi$ but independent of $\lambda,$ such that 
$|\lambda  \phi(\delta|\lambda|^{-\boldsymbol w})| \leq 10^{-10}$  for all $\boldsymbol w\in \mathcal N(\phi)^*$ and all $|\lambda|$ sufficiently large. If $\boldsymbol f$ is the characteristic function of the box
$|x_j| \leq \delta |\lambda| ^{-w_j}$ for $ 1\leq j \leq d$, then   
$$
|\Lambda (\boldsymbol  f) | \sim \|\boldsymbol  f\|_1  = 2^d\delta^d |\lambda|^{-\la \bone, \boldsymbol w\ra } \sim |\lambda|^{-\la \bone,\boldsymbol w\ra} .
$$
Then the estimate \eqref{maineq} implies 
$$
|\lambda|^{-\la \bone,\boldsymbol w \ra} \lesssim \log^m (2+|\lambda|)|\lambda| ^{-\frac 1 \nu} |\lambda|^{-\la \frac{\bone}{\boldsymbol  p}, \boldsymbol w\ra}.
$$
Letting $|\lambda| \to \infty$ implies  
$$
\Big\la \frac{\boldsymbol \nu}{\boldsymbol p'}, \boldsymbol w \Big\ra \geq 1 
$$
for all $\boldsymbol w \in  \mathcal N(\phi)^*$. Consequently,  $ \frac{ \boldsymbol \nu }{\boldsymbol p'} \in \mathcal N(\phi)^{**} =\mathcal N(\phi)$.  

\bibliographystyle{plain}
\begin{bibdiv}
\begin{biblist}

\bib{CCW99}{article}{
      author={Carbery, Anthony},
      author={Christ, Michael},
      author={Wright, James},
       title={Multidimensional van der {C}orput and sublevel set estimates},
        date={1999},
        ISSN={0894-0347},
     journal={J. Amer. Math. Soc.},
      volume={12},
      number={4},
       pages={981\ndash 1015},
         url={http://dx.doi.org/10.1090/S0894-0347-99-00309-4},
      review={\MR{1683156 (2000h:42010)}},
}

\bib{CW02}{inproceedings}{
      author={Carbery, Anthony},
      author={Wright, James},
       title={What is van der {C}orput's lemma in higher dimensions?},
        date={2002},
   booktitle={Proceedings of the 6th {I}nternational {C}onference on {H}armonic
  {A}nalysis and {P}artial {D}ifferential {E}quations ({E}l {E}scorial, 2000)},
       pages={13\ndash 26},
         url={http://dx.doi.org/10.5565/PUBLMAT_Esco02_01},
      review={\MR{1964813 (2004a:42016)}},
}

\bib{CGP13}{article}{
      author={Collins, Tristan~C.},
      author={Greenleaf, Allan},
      author={Pramanik, Malabika},
       title={A multi-dimensional resolution of singularities with applications
  to analysis},
        date={2013},
        ISSN={0002-9327},
     journal={Amer. J. Math.},
      volume={135},
      number={5},
       pages={1179\ndash 1252},
         url={http://dx.doi.org/10.1353/ajm.2013.0042},
      review={\MR{3117305}},
}

\bib{GIL16}{thesis}{
      author={Gilula, Maxim},
       title={A real analytic approach to estimating oscillatory integrals},
        type={Ph.D. Thesis},
 institution={University of Pennsylvania},
        date={2016},
        note={Retrieved from http://search.proquest.com/docview/1811452762?ac-
  countid=35915},
}

\bib{GR04}{article}{
      author={Greenblatt, Michael},
       title={A direct resolution of singularities for functions of two
  variables with applications to analysis},
        date={2004},
        ISSN={0021-7670},
     journal={J. Anal. Math.},
      volume={92},
       pages={233\ndash 257},
         url={http://dx.doi.org/10.1007/BF02787763},
      review={\MR{2072748 (2005f:42021)}},
}

\bib{GR05}{article}{
      author={Greenblatt, Michael},
       title={Sharp {$L^2$} estimates for one-dimensional oscillatory integral
  operators with {$C^\infty$} phase},
        date={2005},
        ISSN={0002-9327},
     journal={Amer. J. Math.},
      volume={127},
      number={3},
       pages={659\ndash 695},
  url={http://muse.jhu.edu/journals/american_journal_of_mathematics/v127/127.3greenblatt.pdf},
      review={\MR{2141648 (2006e:42028)}},
}

\bib{GRB10}{article}{
      author={Greenblatt, Michael},
       title={Oscillatory integral decay, sublevel set growth, and the {N}ewton
  polyhedron},
        date={2010},
        ISSN={0025-5831},
     journal={Math. Ann.},
      volume={346},
      number={4},
       pages={857\ndash 895},
         url={http://dx.doi.org/10.1007/s00208-009-0424-7},
      review={\MR{2587095 (2011f:58043)}},
}

\bib{GRB12}{article}{
      author={Greenblatt, Michael},
       title={Maximal averages over hypersurfaces and the {N}ewton polyhedron},
        date={2012},
        ISSN={0022-1236},
     journal={J. Funct. Anal.},
      volume={262},
      number={5},
       pages={2314\ndash 2348},
         url={http://dx.doi.org/10.1016/j.jfa.2011.12.008},
      review={\MR{2876407 (2012m:42027)}},
}

\bib{GU90}{article}{
      author={Greenleaf, A.},
      author={Uhlmann, G.},
       title={Composition of some singular {F}ourier integral operators and
  estimates for restricted {X}-ray transforms},
        date={1990},
        ISSN={0373-0956},
     journal={Ann. Inst. Fourier (Grenoble)},
      volume={40},
      number={2},
       pages={443\ndash 466},
         url={http://www.numdam.org/item?id=AIF_1990__40_2_443_0},
      review={\MR{1070835}},
}

\bib{PS92}{article}{
      author={Phong, D.~H.},
      author={Stein, E.~M.},
       title={Oscillatory integrals with polynomial phases},
        date={1992},
     journal={Invent. Math.},
      volume={110},
      number={1},
       pages={39\ndash 62},
}

\bib{PS94-2}{article}{
      author={Phong, D.~H.},
      author={Stein, E.~M.},
       title={Models of degenerate {F}ourier integral operators and {R}adon
  transforms},
        date={1994},
        ISSN={0003-486X},
     journal={Ann. of Math. (2)},
      volume={140},
      number={3},
       pages={703\ndash 722},
         url={http://dx.doi.org/10.2307/2118622},
      review={\MR{1307901 (96c:35206)}},
}

\bib{PS94-1}{article}{
      author={Phong, D.~H.},
      author={Stein, E.~M.},
       title={Operator versions of the van der {C}orput lemma and {F}ourier
  integral operators},
        date={1994},
        ISSN={1073-2780},
     journal={Math. Res. Lett.},
      volume={1},
      number={1},
       pages={27\ndash 33},
      review={\MR{1258486 (94k:35347)}},
}

\bib{PS97}{article}{
      author={Phong, D.~H.},
      author={Stein, E.~M.},
       title={The {N}ewton polyhedron and oscillatory integral operators},
        date={1997},
        ISSN={0001-5962},
     journal={Acta Math.},
      volume={179},
      number={1},
       pages={105\ndash 152},
         url={http://dx.doi.org/10.1007/BF02392721},
      review={\MR{1484770 (98j:42009)}},
}

\bib{PSS01}{article}{
      author={Phong, D.~H.},
      author={Stein, E.~M.},
      author={Sturm, Jacob},
       title={Multilinear level set operators, oscillatory integral operators,
  and {N}ewton polyhedra},
        date={2001},
        ISSN={0025-5831},
     journal={Math. Ann.},
      volume={319},
      number={3},
       pages={573\ndash 596},
         url={http://dx.doi.org/10.1007/PL00004450},
      review={\MR{1819885 (2002f:42019)}},
}

\bib{RY01}{article}{
      author={Rychkov, Vyacheslav~S.},
       title={Sharp {$L^2$} bounds for oscillatory integral operators with
  {$C^\infty$} phases},
        date={2001},
        ISSN={0025-5874},
     journal={Math. Z.},
      volume={236},
      number={3},
       pages={461\ndash 489},
         url={http://dx.doi.org/10.1007/PL00004838},
}

\bib{SEE99}{article}{
      author={Seeger, Andreas},
       title={Radon transforms and finite type conditions},
        date={1998},
        ISSN={0894-0347},
     journal={J. Amer. Math. Soc.},
      volume={11},
      number={4},
       pages={869\ndash 897},
         url={http://dx.doi.org/10.1090/S0894-0347-98-00280-X},
}

\bib{VAR76}{article}{
      author={Var{\v{c}}enko, A.~N.},
       title={Newton polyhedra and estimates of oscillatory integrals},
        date={1976},
        ISSN={0374-1990},
     journal={Funkcional. Anal. i Prilo\v zen.},
      volume={10},
      number={3},
       pages={13\ndash 38},
      review={\MR{0422257 (54 \#10248)}},
}

\bib{X2013}{article}{
      author={Xiao, Lechao},
       title={Sharp estimates for trilinear oscillatory integrals and an
  algorithm of two-dimensional resolution of singularities},
        date={2013},
     journal={arXiv preprint arXiv:1311.3725},
}

\end{biblist}
\end{bibdiv}

\end{document}